\documentclass[10pt]{amsart}

\usepackage{amsmath,enumerate, xcolor,verbatim}
\usepackage{amsfonts}
\usepackage{tikz}
  \usepackage{pgffor}\usetikzlibrary{math}
\usepackage{amssymb}
\usepackage{amsxtra}
\usepackage{amsthm}
\usepackage{amsmath,amssymb,amsfonts,amsthm,color,latexsym}
\usepackage{xcolor}
\usepackage{float}

\newcommand{\Z}{\mathbb{Z}}

\newcommand{\C}{\mathbb{C}}
\newcommand{\F}{\mathcal{F}}

\newcommand{\B}{\mathcal{B}}

\newcommand{\ord}{\text{ord}}

\newtheorem{theorem}{Theorem}[section]
\newtheorem{example}[theorem]{Example}
\newtheorem{remark}[theorem]{Remark} 

\newtheorem{lemma}[theorem]{Lemma}
\newtheorem{proposition}[theorem]{Proposition}
\newtheorem{corollary}[theorem]{Corollary}
\newtheorem{definition}[theorem]{Definition}
\usepackage[pagebackref]{hyperref}
\newcommand{\mr}[1]{\ensuremath{\mathrm{#1}}}
\newcommand{\mc}[1]{\ensuremath{\mathcal{#1}}}
\newcommand{\mb}[1]{\ensuremath{\mathbb{#1}}}

\title{Indices of arbitrary holomorphic foliations in the plane}

\title{A Unified Numerical Framework for Classical Invariants in Holomorphic Foliations}

\title{Numerical Invariants of Holomorphic Foliation Germs}
\title{Numerical Invariants of Holomorphic Foliation Germs from reduction of singularities}

\title{Formulae for indices of holomorphic foliations via reduction of singularities}

\date{\today}

\author[M. Falla Luza]{Maycol Falla Luza}
\address[M. Falla Luza]{Instituto de Matem\'atica e Estat\'istica, Universidade Federal Fluminense, Rua Professor Marcos Waldemar de Freitas Reis, s/n, CEP 24210-201\\
Bloco H, Campus do Gragoat\'a - Niter\'oi - RJ, Brasil}
\email{hfalla@id.uff.br}

\author[P. Fernández-Sánchez]{Percy Fernández}
\address[P. Fernández-Sánchez]{Departamento de Ciencias, Secci\'on Matem\'aticas, Pontificia Universidad Cat\'olica del Per\'u, Av. Universitaria 1801, San Miguel, Lima, Per\'u}
\email{pefernan@pucp.edu.pe}

\author[D. Mar\'in]{David Mar\'in}
\address[D. Mar\'in]{Departament de Matem\`atiques, Universitat Aut\`onoma de Barcelona, Edifici C, 08193 Cerdanyola del Vall\`es, Barcelona, Spain}
\email{david.marin@uab.cat}

\subjclass[2020]{Primary 32S65 - 32M25}

\keywords{Holomorphic foliations, Milnor number, Multiplicity of a foliation along a divisor of separatrices} 

\thanks{ }

\begin{document}

\begin{abstract}
We study numerical invariants associated with the reduction of singularities of holomorphic foliation germs on $(\mathbb{C}^2,0)$. Building on our previous work on generalized curve foliations, we extend explicit formulas for several fundamental invariants to arbitrary foliations. In particular, we provide general expressions for the discrepancy vector, the Milnor and intrinsic Milnor numbers, and 
classical indices along a separatrix
as
Camacho-Sad, Variation, Gómez-Mont-Seade-Verjovsky and also the Baum-Bott index. These extensions require a careful analysis of the contributions of saddle-nodes arising in the desingularization process. As applications, we recover results of Brunella and Cavalier-Lehmann, as well as a related statement appearing in \cite{FP-GB-SM2021}, within a unified and purely numerical framework. Furthermore, we obtain intrinsic characterizations of generalized curve foliations in terms of indices and of second type foliations in terms of the discrepancy vector. 
\end{abstract}

\maketitle

\tableofcontents
\section*{Introduction}

The study of singular holomorphic foliations in the complex plane has long been guided by the search for numerical and geometric invariants capable of capturing subtle local and global behaviors. In a previous work \cite{FFMR}, we introduced explicit formulas for generalized curve foliations. 
These formulas, which relate the combinatorial data of the desingularization process to analytic properties of the foliation, provide the starting point for the present article.
In this paper, we extend those results to arbitrary germs of holomorphic foliations in $(\mathbb{C}^2,0)$. More precisely, we generalize the formulas for the discrepancy vector (Theorem~\ref{ell}), the Milnor number (Theorem~\ref{muF}), 
the intrinsic Milnor number and the Camacho-Sad, Variation, Gómez-Mont-Seade-Verjovsky indices along a separatrix, as well as the Baum-Bott index (Theorem~\ref{Indices}). These generalizations preserve
the explicit and computationally accessible nature of the original formulas. The key point in these extensions is the careful incorporation of the invariants associated with the saddle-nodes that appear during the reduction of singularities.
Some examples illustrate the scope and effectiveness of our methods.

As an application, we recover a theorem of Brunella and Cavalier-Lehmann (Proposition~\ref{LC}), together with a related result appearing in \cite[Proposition~4.7]
{FP-GB-SM2021} (Corollary~\ref{mil = intmil}). Our approach provides a unified framework for understanding these statements and highlights the essential role played by discrepancies and indices in their proofs.
In the final part of the article, we obtain new characterizations of generalized curve foliations in terms of indices (Theorem~\ref{Pi}). Furthermore, Theorem~\ref{ell} yields a characterization of second type foliations in terms of the discrepancy vector. These criteria offer new insights into the geometry of both families, establishing new connections between analytic, topological, and combinatorial aspects.

The paper is organized as follows.
Section~1 collects, for the convenience of the reader, the basic definitions concerning singularities of holomorphic foliation germs in the plane. Section~2 introduces the discrepancy vector and establishes its main properties, including the generalized formula for the Milnor number. Section~3 develops the generalized formulas for the various indices and examines their consequences. Section~4 contains the applications to known results and the characterization of generalized curve foliations. 
Finally we briefly discuss the dependence of our formulas on the ordering of the irreducible components of the exceptional divisor.

\section{Preliminaries on Holomorphic Foliations and Notation}

Let $\F$ be a germ of singular holomorphic foliation on $(\C^2,0)$ defined by a $1$--form $\omega$ or a vector field $\upsilon$. A formal curve $C=\{f=0\}$ is a \textbf{separatrix} of $\F$ if $f$ divides $df \wedge \omega$. When the curve is analytic we say that the separatrix is convergent. 

We say that $0\in \C^2 $ is a reduced singularity for $\F$ if the linear part $D\upsilon (0)$ of the vector field $\upsilon$ is non-zero and has eigenvalues $\lambda_1$, $\lambda_2$ fitting in one of the two cases:
\begin{enumerate}
\item $\lambda_1 \cdot \lambda_2 \neq 0$ and $\lambda_1 / \lambda_2 \notin \mathbb{Q}^+$, non-degenerate singularity;
\item $\lambda_1 \neq 0$ and $\lambda_2 =0$, saddle-node singularity.
\end{enumerate}

In the first case the foliation is given in some analytic coordinates by an equation of the form 
$$
\omega = x(\lambda_1 + \ldots)dy - y(\lambda_2 +\ldots)dx
$$
so there are exactly two separatrices $\{x=0\}$, $\{y=0\}$ throught the singularity. In the second case, up to a formal change
of coordinates, the singularity is given by a 1-form of the type
$$
\omega = x^k dy -y (1+\lambda x^{k-1})dx
$$
where $\lambda \in \C$ and $k \geq 2$. The curve $\{x = 0\}$ is a convergent separatrix, called strong, whereas $\{y = 0\}$ corresponds to a possibly
formal separatrix, called weak.

It is well known that there is always a reduction of singularities, that is, a finite composition of blow-ups $\pi:(M,E)\to (\C^2,0)$ such that all singularities of $\bar{\F}:=\pi^* \F$ are reduced (see, for example, \cite{CLS}). Moreover, there exists a minimal reduction of singularities, in the sense that any other reduction is obtained from it by an additional sequence of blow-ups.
Throughout this paper, $\pi$ will denote a (not necessarily minimal) reduction of singularities  of $\F$.

For a component $D$ of the exceptional divisor $E$, there are two possibilities:
\begin{enumerate}
\item $D$ is invariant by $\bar{\F}$ (non-dicritical). In this case, $D$ contains a finite number of reduced singularities. Each non-corner singularity carries a separatrix transversal to $D$.
\item $D$ is not invariant by $\bar{\F}$ (dicrital). In this case, by definition, D may intersect only non-dicritical components and $\bar{\F}$ is everywhere
transverse do $D$.
\end{enumerate}

A saddle-node singularity $q \in  Sing(\bar{\F})$ is is said to be a \textbf{tangent saddle-node} if its weak separatrix is contained in the exceptional divisor $D$. Observe that if a corner singularity is a saddle-node, it would be necessarily a tangent saddle-node.

\begin{definition}
Consider the following subsets of~$E$:
\begin{enumerate}
\item $SN(\bar\F)\subset E$: saddle-nodes,
\item $TSN(\bar\F)\subset SN(\bar\F)$: tangent saddle-nodes,
\item $CSN(\bar\F)\subset TSN(\bar\F)$: corner saddle-nodes.
\end{enumerate}
We say that
\begin{enumerate}
\item $\F$ is a \textbf{generalized curve} if $SN(\bar\F)=\emptyset$,
\item $\F$ is \textbf{of second type} if $TSN(\bar\F)=\emptyset$,
\item $\F$ is \textbf{corner-non-degenerate} (CND) if $CSN(\bar\F)=\emptyset$.
\end{enumerate}
\end{definition}

\begin{remark}\label{BSN}
If we blow-up a tangent (resp. non-tangent) saddle-node $q$ we obtain an ordinary saddle and a  corner (resp. non-tangent) saddle-node $q'$ with the same invariant $k$ whose weak separatrix is the strict transform of the weak separatrix of $q$.
In particular, the notion of CND is not intrinsic in the sense that it depends on the reduction of singularities. What is well-defined is the CND condition for the minimal reduction of singularities.
\end{remark}

\begin{example}\label{Ejemplo-David}
Consider the foliation defined by the $1$-form $$ \omega=(xy+x^2y-x^2-y^2)ydx-(x-1)x^3dy$$ and its pull-back by one blow-up
 \begin{align*}
\omega^1=\frac{(x,tx)^*\omega}{x^3}&=-t \left(t -1\right) \left(t -x \right) d x -\left(x -1\right) x\, d t,\\
\omega^2=\frac{(sy,y)^*\omega}{y^3}&=
-s \left(s -1\right) \left(s^{2} y -1\right) dy+y \left(s^{2} y -s^{2}+s -1\right) ds.
\end{align*}
Observe that $\omega^1$ has a saddle-node $q$ at $(t,x)=(0,0)$  whose strong separatrix $\{t=0\}$ is transverse to the exceptional divisor $E_1=\{x=0\}$ and a radial singularity at $(t,x)=(1,0)$. On the other hand, $\omega^2$ has a saddle at $(s,y)=(0,0)$. After blowing up the singular point $(t,x)=(1,0)$ we obtain a dicritical component $E_2$ without tangencies. Thus this is a CND foliation which is not of second type.
\end{example}

\begin{example}\label{Dulac}
The Dulac-resonant foliation defined by $\omega= ydx -(2x+y^2)dy$ can be reduced with two blow-ups and have one non-degerate singularity and one corner saddle-node. Thus this is a non-CND foliation.
\end{example}

Recall that the \textbf{Milnor number} $\mu_0(\F)$ of the foliation $\F$ at $0\in\C^2$ given by the $1$-form $\omega=P(x,y)dx+Q(x,y)dy$ is defined by 
\[ \mu_0(\F)=i_0(P,Q),\] 
where $i_0(P,Q)$ denotes the intersection number of two germs $P$ and $Q$ at the origin. Remember that we consider  $P$ and $Q$  coprime, so $\mu_0(\F)$ is a non negative integer. In \cite[Theorem A]{CLS} it was proved that the Milnor number of a foliation is  a topological invariant. For instance, the Milnor number of a non-degenerate reduced singularity is $1$, whereas the Milnor number of the saddle-node  $x^k dy -y (1+\lambda x^{k-1})dx$ is $k$. 

Let $C$ be a (maybe formal) separatrix of $\F$ with primitive pa\-ra\-me\-tri\-za\-tion $\gamma: (\C,0)\to(\C^2,0)$ and $\upsilon$ a vector field defining $\F$, Camacho-Lins Neto-Sad \cite[Section 4]{CLS} defined the \textbf{multiplicity of $\F$ along $C$ at $0$} as
$\mu_0(\F,C):=\ord_t \theta(t)$,
where $\theta(t)$ is the unique vector field at $(\C,0)$ such that 
$\gamma_{*} \theta(t)=\upsilon\circ\gamma(t)$. 
If $\omega=P(x,y)dx+Q(x,y)dy$ is a 1-form inducing $\F$ and $\gamma(t)=(x(t),y(t))$, we have
\begin{equation*}
\theta(t)=
\begin{cases}
-\frac{Q(\gamma(t))}{x'(t)} & \text{if $x'(t)\neq 0$}
\medskip \\
 \frac{P(\gamma(t))}{y'(t)} & \text{if $y'(t)\neq 0$}.
\end{cases}
\end{equation*}

Following \cite[Section 2]{FP-GB-SM2024}, we define the multiplicity of  $\mathcal{F}$ along any nonempty divisor of separatrices 
$\mathcal{B}=\sum_{C} a_C\cdot C$ of separatrices of $\mathcal F$ at $0$ as follows:
\begin{equation}\label{eq:gmul}
\mu_0(\mathcal{F},\mathcal{B})=\left(\displaystyle\sum_{C}a_{C}\cdot \mu_0(\mathcal{F},C)\right)-\sum_C a_C +1.
\end{equation}
Note that this is equivalent to extend linearly the function $C \mapsto \mu_0(\mathcal{F},C)-1$. 

\begin{remark}
Recall that for the saddle-node foliation $\omega = x^k\, dy - y (1+\lambda x^{k-1})\,dx$, the strong (respectively, weak) separatrix is $S=\{x=0\}$ (respectively, $W=\{y=0\}$). It is straightforward to verify that $\mu_0(\F, S)=1$ and $\mu_0(\F, W)=k>1$.
\end{remark}

We will also need the notion of weights associated with a sequence of blow-ups
\[
\pi=\pi_1 \circ \cdots \circ \pi_n: (M,E) \to (\C^2,0),
\]
whose centers are $p_0=0, p_1, \ldots, p_{n-1}$ and whose exceptional divisor has components $E_1,\ldots,E_n$, as defined in \cite{CLS}. Recall that the weights are defined inductively by $\rho_{E_1}=1$. If $E_k=\pi_k^{-1}(p_{k-1})$ with $p_{k-1}=E_{i_1}\cap E_{i_2}$, where $i_1,i_2<k$ (respectively, $p_{k-1}\in E_i$), then $\rho_{E_k}=\rho_{E_{i_1}}+\rho_{E_{i_2}}
\quad (\text{respectively, } \rho_{E_k}= \rho_{E_i})$. It is well known that $\rho_{E_i}=\nu_0(C_i)$ (algebraic multiplicity), where $C_i$ is irreducible and its strict transform $\bar C_i$ is transverse to $E_i$ at a point that is not a corner. Hence we obtain the \textbf{vector of weights}
\[
\rho_E = (\rho_{E_1}, \ldots, \rho_{E_n})^\mathsf{T}.
\]

Following \cite{Genzmer, G-M}, we define the \textbf{tangency excess} of $\F$ by
\[
\tau_0(\F):=\sum_{q\in TSN(\bar{\F})}\sum_{E_i\ni q}\rho_{E_i}\big(\mu_q(\bar{\F},E_i)-1\big),
\]
and the \textbf{tangency excess vector}
\[
\tau_\F=(\tau_0(\F), \tau_{p_1}(\bar{\F}_1),\ldots, \tau_{p_{n-1}}(\bar{\F}_{n-1}))^\mathsf{T}.
\]

\begin{remark}
It is easy to verify that if $p$ is a reduced singularity then $\tau_p(\F)=0$.
On the other hand, although in \cite{Genzmer} the definition of $\tau_0(\F)$ is required that $\pi$ is the minimal resolution of singularities of $\F$,
 Remark~\ref{BSN} implies it takes the same value for any reduction of singularities of $\F$. 
\end{remark}

For each singular point $q$ of $\bar\F$ which is not a corner there exists a unique (formal) separatrix $B_q$ of $\F$ such that $\bar B_q$ is a separatrix of $\bar\F$ through $q$. All these separatrices of $\F$ are called isolated with respect to $\pi$.

A (formal) germ divisor $\displaystyle \B=\sum\limits_{B\in \operatorname{Sep}(\mathcal{F})} a_{B}\, B$, $a_B\in\{-1,0,1\}$ is called \textbf{balanced} adapted to $\pi$ if it satisfies the following conditions:
\begin{enumerate}[(a)]
\item if $B$ is an isolated\footnote{formal weak separatrices of saddle-nodes must be taken as isolated separatrices} separatrix of $\F$ with respect to $\pi$ then $a_B=1$, 
\item for each non-invariant (dicritical) component $D$ of the exceptional divisor $E$ of the reduction $\pi$ of singularities of $\F$ we have $\sum_{B}a_B \overline{B}\cdot D=2-\mr{val}_D$, where $\mr{val}_D$ is the valence of $D$, i.e. the number of irreducible components of $\overline{E\setminus D}$ meeting $D$, and $\overline{B}$ is the strict transform of $B$.
\end{enumerate}

Notice that a non-dicritical foliation has a unique balanced divisor which is the sum of the isolated separatrix. In contrast, a dicritical foliation have infinitely many balanced divisors. However, we can always take a balanced divisor of the form $\B =\B_0 - \B_{\infty}$, with $\B_0, \,\, \B_{\infty}\geq 0$ such that: 

\begin{enumerate}
\item $ \B_0$ is the sum of the isolated separatrices and, for each dicritical component $D$ with valence smaller than $2$, there are $2 - \mr{val}_D$ curves of the pencil of $D$. 
\item $ \B_{\infty}$ is the sum of $\mr{val}_D - 2$ curves of the pencil of each dicritical component $D$ with valence bigger than $3$.
\end{enumerate}
We say that this $\B$ is a \textbf{minimal} balanced divisor.

\section{Discrepancy and Milnor number}

Let $\F$ be a singular holomorphic foliation on $(\C^2,0)$ and let $\pi = \pi_1 \circ \cdots \circ \pi_n : (M, E) \longrightarrow (\mathbb{C}^2, 0)$  be a composition of $n$ blow-ups at points $p_0=0, p_1, \ldots, p_{n-1}$. In order to stablish our main results of this section we need to define some combinatorial data associated to $\F$ with respect to $\pi$.  
Denote by $E_1, \dots, E_n$ the irreducible components of the exceptional divisor $E = \pi^{-1}(0)$, and let $A = (A_{ij})$ be the self-intersection matrix of $E$, where 
\[
A_{ij} = E_i \cdot E_j, \qquad i,j = 1, \dots, n.
\]
Moreover, associated with $\pi$ we define a sequence of matrices $A_1, A_2, \dots, A_n = A$, where, for each $j$, the matrix $A_j$ denotes the self-intersection matrix of the exceptional divisor of $\pi_1 \circ \cdots \circ \pi_j$. We also consider the following column vectors: for any divisor $\B$
\[
S_{\mathcal{B}} = 
\begin{pmatrix}
\overline{\mathcal{B}} \cdot E_1 \\
\vdots \\
\overline{\mathcal{B}} \cdot E_n
\end{pmatrix}, 
\qquad
u = 
\begin{pmatrix}
1 \\
\vdots \\
1
\end{pmatrix},
\qquad
\iota = 
\begin{pmatrix}
\iota_1 \\
\vdots \\
\iota_n
\end{pmatrix},
\]
where $\overline{\mathcal{B}}$ denotes the strict transform of $\mathcal{B}$ by $\pi$, and 
\[
\iota_j = 
\begin{cases}
1, & \text{if } E_j \text{ is } \pi^* \mathcal{F}\text{-invariant}, \\
0, & \text{otherwise}.
\end{cases}
\] 
and let  $\delta = u-\iota$ be the vector corresponding to the dicritical components. We define a sequence of matrices $F_1, F_2, \dots, F_n$ associated with the sequence of blow-ups.  
Each matrix $F_k$ is a $k \times k$ lower triangular matrix with $1$'s along the diagonal.  
We start with $F_1 = (1),$ and for $k \geq 2$, we define
\[
F_{k} =
\begin{pmatrix}
F_{k-1} & 0 \\
- e_k & 1
\end{pmatrix},
\]
where the row vector $e_k = (e_{k,1}, \dots, e_{k,k-1})$ is given by
\[
e_{k,j} =
\begin{cases}
1, & \text{if the $k$-th blow-up point lies on the divisor } E_j, \\
0, & \text{otherwise}.
\end{cases}
\]
Denote by $F=F_n$, called the \textbf{proximity matrix}, cf. \cite[Definition 1.1.28]{Alberich} and \cite[\S3.3]{Casas}. 
The following relation holds (see \cite[Lemma 1.1.35]{Alberich} or  \cite[Lemma 2.1]{FFMR})
\begin{equation}\label{-A=F^TF}
A = - F^{\mathsf{T}} F.
\end{equation}
Also, for any curve $C$, we introduce the \textbf{vector of algebraic multiplicities} associated with the strict transforms of $C$:
$$
\nu(C) = (\nu_0(C), \nu_{p_1}(\bar C_1),\ldots, \nu_{p_{n-1}}(\bar C_{n-1}))^{\mathsf{T}}.
$$
We then have the following lemma (see \cite[Lemma 3.1]{FFMR}).
\begin{lemma}\label{multiplicidades de curva}
The following equality holds: $\nu(C)=(F^{-1})^{\mathsf{T}}S_C.$
\end{lemma}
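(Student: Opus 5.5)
We want $\nu(C)=(F^{-1})^{\mathsf T}S_C$, or equivalently $F^{\mathsf T}\nu(C)=S_C$. So the plan is to check, component by component, that
\[
\overline{C}\cdot E_j=\nu_{p_{j-1}}(\bar C_{j-1})-\sum_{k>j} e_{k,j}\,\nu_{p_{k-1}}(\bar C_{k-1}),
\]
where we set $\bar C_0=C$ and $p_0=0$. Here the $j$-th row of $F^{\mathsf T}$ has a $1$ in position $j$ and entries $-e_{k,j}$ in positions $k>j$. This is the classical proximity equality, and we prove it by induction on the number $n$ of blow-ups. By linearity, nothing changes if $C$ is replaced by a divisor, so we may also assume $C$ irreducible.

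\textbf{Basic facts.} Two standard facts about one blow-up drive the argument.

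First, when $p_{j-1}$ is blown up, the strict transform $\bar C_j$ meets the new component $E_j$ with total intersection $\bar C_j\cdot E_j=\nu_{p_{j-1}}(\bar C_{j-1})$.

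Second, suppose a later blow-up $\pi_k$, with $k>j$, has center $p_{k-1}$ lying on the strict transform of $E_j$ (so $e_{k,j}=1$). Then the projection formula gives
\[
\bar C_k\cdot \tilde E_j=\bar C_{k-1}\cdot E_j-\nu_{p_{k-1}}(\bar C_{k-1})\,\nu_{p_{k-1}}(E_j).
\]
Since $E_j$ is smooth, $\nu_{p_{k-1}}(E_j)=1$. So the intersection number with $E_j$ drops by exactly $\nu_{p_{k-1}}(\bar C_{k-1})$.

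If instead $p_{k-1}\notin E_j$, the number $\bar C\cdot E_j$ is unchanged. Note that if $p_{k-1}\notin \bar C_{k-1}$, its multiplicity there is $0$, which is consistent with both cases.

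\textbf{Induction.} Start from $\bar C_j\cdot E_j=\nu_{p_{j-1}}(\bar C_{j-1})$ immediately after the $j$-th blow-up. Following the subsequent blow-ups $k=j+1,\dots,n$, each one with $e_{k,j}=1$ subtracts $\nu_{p_{k-1}}(\bar C_{k-1})$ and the others change nothing. This gives the displayed identity for every $j$, i.e. $S_C=F^{\mathsf T}\nu(C)$. Since $F$ is unipotent lower triangular, it is invertible, and the lemma follows.

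\textbf{Main obstacle.} The only delicate point is justifying the drop formula: that the pullback $\pi_k^*E_j$ equals $\tilde E_j+E_k$ when $p_{k-1}\in E_j$, and that $\pi_k^*\bar C_{k-1}=\bar C_k+\nu_{p_{k-1}}(\bar C_{k-1})E_k$. Together with $E_k\cdot \bar C_k=\nu_{p_{k-1}}(\bar C_{k-1})$ and $E_k\cdot\tilde E_j=1$, these follow from the projection formula $\pi_k^*D\cdot \pi_k^*D'=D\cdot D'$. The subtraction then gives the drop by $\nu_{p_{k-1}}(\bar C_{k-1})\cdot 1$.

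For formal curves the same computation works using intersection multiplicities of formal germs. Alternatively, one can cite \cite[Lemma 3.1]{FFMR} or the proximity equalities in \cite{Casas}.
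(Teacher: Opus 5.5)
Your proof is correct. The paper does not prove this lemma itself; it only quotes \cite[Lemma 3.1]{FFMR}.

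What you give is the classical proximity equality, as in \cite{Casas}. You read off the $j$-th row of $F^{\mathsf T}\nu(C)=S_C$ and track $\bar C\cdot E_j$ from its initial value $\nu_{p_{j-1}}(\bar C_{j-1})$ through the later blow-ups. Each later center lying on the strict transform of $E_j$ lowers this number by the multiplicity of $\bar C$ at that center, because $E_j$ is smooth. Your justification of the drop via $\pi_k^*E_j=\tilde E_j+E_k$ and $\pi_k^*\bar C_{k-1}=\bar C_k+mE_k$ is fine. Equivalently, it is the one-blow-up Noether formula $i_{p_{k-1}}(\bar C_{k-1},E_j)=m+\sum_{q\in E_k}i_q(\bar C_k,\tilde E_j)$. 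That formula is purely algebraic, so it also covers the formal separatrices the paper needs.

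A different route stays inside the paper's own framework and uses total rather than strict transforms. Let $M_C$ be the vector of vanishing orders of $\pi^*f$ along the $E_j$. It satisfies $M_C=-A^{-1}S_C$, by Remark~\ref{SVK} or simply from $(\pi^*C)\cdot E_j=0$. The recursion $M_k=\nu_{p_{k-1}}(\bar C_{k-1})+\sum_{i<k}e_{k,i}M_i$ says exactly that $FM_C=\nu(C)$. Then \eqref{-A=F^TF} gives $\nu(C)=F(-A^{-1})S_C=FF^{-1}(F^{-1})^{\mathsf T}S_C=(F^{-1})^{\mathsf T}S_C$.

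Your version is more self-contained, since it never uses $A=-F^{\mathsf T}F$. The total-transform version makes the lemma a two-line consequence of that identity, and it ties directly to the vanishing-order interpretation of $-A^{-1}S_C$ that the paper uses later.
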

We extend linearly $\nu$ to arbitrary divisors.
As a consequence we recover Max Noether's formula, cf. \cite[Theorem~3.3.1]{Casas}.
\begin{corollary}
If $\pi$ is a desingularization of the union $C\cup D$ of two germs of curves at $0$ without commom components, then $i_0(C,D)=\langle\nu_C,\nu_D\rangle$.
In particular, $i_0(C,D)\ge \nu_0(C)\nu_0(D)$, with equality if and only if their tangent cones  at $0$ are disjoint.
\end{corollary}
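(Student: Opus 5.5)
The plan is to express the intersection number of the strict transforms with the exceptional divisor through Lemma~\ref{multiplicidades de curva}, and then evaluate the intersection on $M$ using the factorization \eqref{-A=F^TF}.

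First, since $\pi$ desingularizes $C\cup D$, the strict transforms $\bar C$ and $\bar D$ are smooth and transverse to $E$ at non-corner points. They are also separated, so $\bar C\cdot\bar D=0$ on $M$. Because $C$ and $D$ have no common component, this separation holds provided the resolution separates the branches, which we take as part of ``desingularization of $C\cup D$'', namely $\overline{C\cup D}\cup E$ is a normal crossings divisor.

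Next, I write the total transform as $\pi^*C=\bar C+\sum_i m_i(C)E_i$. The multiplicity vector $m(C)=(m_i(C))$ is determined by the condition $\pi^*C\cdot E_j=0$ for all $j$. This gives $S_C+A\,m(C)=0$, hence $m(C)=-A^{-1}S_C=F^{-1}(F^{-1})^{\mathsf T}S_C=F^{-1}\nu(C)$, where the last equality uses Lemma~\ref{multiplicidades de curva} together with \eqref{-A=F^TF}.

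By the projection formula, $i_0(C,D)=\pi^*C\cdot\pi^*D=\pi^*C\cdot\bar D$. Since $\bar C\cdot\bar D=0$, this equals $m(C)^{\mathsf T}S_D$. Writing $S_D=F^{\mathsf T}\nu(D)$ by the lemma gives
\[
i_0(C,D)=\nu(C)^{\mathsf T}(F^{-1})^{\mathsf T}F^{\mathsf T}\nu(D)=\langle \nu(C),\nu(D)\rangle .
\]

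For the inequality, note that all entries of $\nu(C)$ and $\nu(D)$ are nonnegative and the first entries are $\nu_0(C)$ and $\nu_0(D)$. Hence $i_0(C,D)\ge\nu_0(C)\nu_0(D)$, with equality iff $\nu_{p_k}(\bar C_k)\,\nu_{p_k}(\bar D_k)=0$ for every $k\ge1$. This in turn holds iff no infinitely near point $p_k$ lies on both strict transforms.

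The tangent cone of a curve corresponds to the points of $E_1$ met by its strict transform after the first blow-up. If the tangent cones are disjoint, the strict transforms already separate after one blow-up and never meet again, so every later product vanishes. Conversely, a common tangent direction gives a point $q\in E_1$ on both $\bar C_1$ and $\bar D_1$. That point must be blown up in the resolution, since otherwise the strict transforms would meet on $M$, contradicting $\bar C\cdot\bar D=0$. Then $q=p_k$ for some $k$ with both multiplicities positive, so the inequality is strict.

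The main delicate point is justifying $\bar C\cdot \bar D=0$ and the identification of each $p_k$ with a point blown up along both curves. Everything else is the linear algebra above.
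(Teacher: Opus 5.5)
Your proof is correct and follows the paper's route. The paper quotes $i_0(C,D)=\langle -A^{-1}S_C,S_D\rangle$ from \cite[Corollary 2.11]{FFMR} and then substitutes $-A^{-1}=F^{-1}(F^{-1})^{\mathsf T}$ and Lemma~\ref{multiplicidades de curva}, exactly as you do; you instead rederive that intersection formula inline from the total transform (with $m(C)=-A^{-1}S_C$) and the projection formula, and your argument for the ``in particular'' part, which the paper leaves implicit, is also correct.
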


\begin{proof}
We know from \cite[Corollary 2.11]{FFMR} that  $i_0(C,D)=\langle -A^{-1}S_C,S_D\rangle$. We just need to use relation (\ref{-A=F^TF}) and previous Lemma to obtain the corollary.
\end{proof}
Another consequence is the following relation between the proximity matrix and the weights vector introduced in \cite{CLS}.

\begin{corollary}\label{weights}
The vector of weights $\rho_E=(\rho_{E_1},\ldots,\rho_{E_n})^\mathsf{T}$ coincides with
the first column of~$F^{-1}$.
\end{corollary}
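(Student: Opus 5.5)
The natural route is to apply Lemma~\ref{multiplicidades de curva} to a suitable curve germ, using the characterization $\rho_{E_i}=\nu_0(C_i)$ recalled above. First I would fix, for each $i$, an irreducible germ $C_i$ whose strict transform $\bar C_i$ by $\pi$ is smooth and meets $E$ transversally at a single non-corner point of $E_i$. Such a germ exists: take a small disc in $M$ transverse to $E_i$ at a generic point and push it down by $\pi$. Then $S_{C_i}$ is the $i$-th standard basis vector $\epsilon_i$, because $\bar C_i\cdot E_j=\delta_{ij}$.

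Next, Lemma~\ref{multiplicidades de curva} gives
\[
\nu(C_i)=(F^{-1})^{\mathsf T}\epsilon_i ,
\]
which is the $i$-th row of $F^{-1}$, read as a column. Its first entry is $\nu_0(C_i)=(F^{-1})_{i1}$. Hence the $i$-th entry of the first column of $F^{-1}$ equals $\nu_0(C_i)=\rho_{E_i}$, and this holds for every $i$.

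The one point that needs care is the identity $\rho_{E_i}=\nu_0(C_i)$ itself. If we want to avoid citing it, I would instead prove the statement directly by induction on $k$, using the block structure of $F_k$. Write
\[
F_k^{-1}=\begin{pmatrix}F_{k-1}^{-1}&0\\ e_kF_{k-1}^{-1}&1\end{pmatrix}.
\]
Then the new entry of the first column is $\sum_j e_{k,j}(F_{k-1}^{-1})_{j1}$. This is the sum of the weights of the one or two components containing $p_{k-1}$, which is exactly the recursive definition of $\rho_{E_k}$. The base case is $F_1^{-1}=(1)$, matching $\rho_{E_1}=1$.

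A subtlety in this induction is that $e_k$ must record only the components on which $p_{k-1}$ actually lies, namely strict transforms of earlier $E_j$. That is precisely how the proximity matrix is defined, so at most two entries of $e_k$ are nonzero and the recursion matches exactly. I expect this to be the only delicate step, and it is mild; the induction proof is self-contained and I would present it as the main argument.
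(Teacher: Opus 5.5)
Your proposal is correct. Your first paragraph is essentially the paper's proof. You apply Lemma~\ref{multiplicidades de curva} to a curvette $C_i$ of $E_i$, for which $S_{C_i}=\varepsilon_i$, and invoke the cited identity $\rho_{E_i}=\nu_0(C_i)$. You also read off the relevant entry correctly as $(F^{-1})_{i1}$. The paper's displayed chain ends with $\langle (F^{-1})^{\mathsf T}\varepsilon_1,\varepsilon_i\rangle$, which is a transposition slip for $\langle F^{-1}\varepsilon_1,\varepsilon_i\rangle$.

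The induction you choose as your main argument is a genuinely different route. It never uses curves, only two facts:
\begin{itemize}
\item the block inversion $F_k^{-1}=\begin{pmatrix}F_{k-1}^{-1}&0\\ e_kF_{k-1}^{-1}&1\end{pmatrix}$;
\item the fact that each center $p_{k-1}$ lies on one or two of the earlier (strict transforms of) $E_j$, because it sits on the normal crossing divisor over $0$.
\end{itemize}
Together these make the recursion for the first column coincide with the recursive definition of the weights.

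The paper's route is a one-line consequence of Lemma~\ref{multiplicidades de curva}. It also gives the whole $i$-th row of $F^{-1}$ a geometric meaning, namely the multiplicity sequence $\nu(C_i)$ of a curvette. The price is relying on the ``well known'' identity $\rho_{E_i}=\nu_0(C_i)$ from \cite{CLS}. Your induction is self-contained and purely combinatorial. Combined with Lemma~\ref{multiplicidades de curva}, it actually proves that identity instead of assuming it.
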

\begin{proof}
Recall that $\rho_{E_i}=\nu_0(C_i)$, where $C_i$ is irreducible and $\bar C_i$ is transverse to $E_i$ at a point that is not a corner. 
By Lemma~\ref{multiplicidades de curva}, 
$$ 
\langle \rho_E, \varepsilon_i \rangle = \rho_{E_i}= \langle (F^{-1})^\mathsf{T}\varepsilon_i , \varepsilon_1\rangle=\langle (F^{-1})^\mathsf{T}\varepsilon_1 , \varepsilon_i\rangle
$$
for all $i=1, \ldots, n$, where $\varepsilon_1,\ldots,\varepsilon_n$ is the canonical basis of $\Z^n$.
\end{proof}

Let us define  the vector  $\ell=(\ell_1,\ldots,\ell_n)^{\mathsf{T}}$  of \textbf{discrepancies} of $\pi^*\F$ along each component of $E$ as follows. If $\pi_j$ is the blow up of a point $p_{j-1}$  and $\omega_j$ is a $1$-form defining the foliation around $p_{j-1}$ then $\ell_j$ is the vanishing order of $\pi_j^*(\omega_j)$ along $E_j$. In fact, it is easy to see that $\ell_j=\nu_{p_{j-1}}((\pi_{j-1}\circ\cdots\circ\pi_1)^*\F)+1-\iota_j$. This is the vector of discrepancies of the conormal bundle of $\F$ in the sense that we have the following relationship (see \cite{Brunella})
\[
N^*_{\bar \F}= \pi^*(N^*_{\F}) + \sum_{i=1}^n \ell_i E_i.
\]

\begin{remark}
One of the main results of \cite{FFMR}, Theorem 2.6, establishes the following relation for a second type foliation:
$$\ell=(F^{-1})^{\mathsf{T}}S_{\B}-F\iota. $$
A natural question is whether the converse holds, namely, whether this equality implies that the foliation is of second type. Another related question is to determine the corresponding relation for an arbitrary foliation.
Example~\ref{Ejemplo-David}  shows that the above formula does not hold in general for foliations that are not of second type.
\end{remark}

\begin{example}
A balanced divisor for the foliation of Example \ref{Ejemplo-David} defined by the $1$-form $$ \omega=(xy+x^2y-x^2-y^2)ydx-(x-1)x^3dy$$ is 
$$
\B = \{x=0\}+\{y=0\}+\{y=x\},
$$
thus we have
\[F=\left(\begin{array}{rr}1 & 0\\ -1 & 1\end{array}\right),\quad S_{\mc B}=\left(\begin{array}{c}2\\ 1\end{array}\right),\quad \iota=\left(\begin{array}{c}1\\ 0\end{array}\right),\quad \ell=\left(\begin{array}{c}3\\ 2\end{array}\right)\neq \left(\begin{array}{c}2\\ 2\end{array}\right)=(F^{-1})^\mathsf{T}S_{\mc B}-F\iota.\]
\end{example}

To obtain a general formula for the discrepancy vector, we need to introduce additional data associated with the foliation and its reduction of singularities.

Recall that we can write the balanced divisor $\mc B=\mc I+\mc D$ as the sum of all isolated separatrices -in $\mc I$-, including $B_q$ for $q\in SN(\bar\F)\setminus CSN(\bar\F)$,  and some dicritical separatrices -in $\mc D$- with coefficients $\pm1$. We can also write 
\[
\B= \left( \sum_{q \in SN \setminus TSN} B_q  + \sum_{p \in TSN \setminus CSN} B_p +\underbrace{\sum C}_{\substack{\text{isolated non-}\\ \text{degenerate separatrices}}}  \right) + \mc D
\]
and \textbf{weighted balanced} $\mc B'$ divisor of  separatrices for $\F$, requiring also the balanced con\-di\-tion on the dicritical separatrices, so that
\[
\begin{aligned}
{\mc B'} &= \left( \sum_{q \in SN \setminus TSN} B_q  
+ \sum_{p \in TSN \setminus CSN}\mu_p(\bar{\F}, E_{i_q}) B_p 
+ \sum C \right) + \mc D \\
&= {\mc B}+\sum_{q\in TSN(\bar\F)\setminus CSN(\bar\F)}(\mu_q(\bar\F, E_{i_q})-1)B_q
\end{aligned}
\]
We define the \textbf{tangency saddle-node vector} of $\F$ by means
\[T_\F=\left(\sum\limits_{q\in TSN(\bar\F)\cap E_1}(\mu_q(\bar\F,E_1)-1),\ldots,\sum\limits_{q\in TSN(\bar\F)\cap E_n}(\mu_q(\bar\F,E_n)-1)\right)^\mathsf{T}\]
and we put
\[S_{\F}:=S_{\mc B}+T_\F=S_{\mc B'}+C_\F,\]
where
\[C_\F=\left(\sum\limits_{q\in CSN(\bar\F)\cap E_1}(\mu_q(\bar\F,E_1)-1),\ldots,\sum\limits_{q\in CSN(\bar\F)\cap E_n}(\mu_q(\bar\F,E_n)-1)\right)^\mathsf{T}.\]
\begin{remark}
Notice that 
\begin{enumerate}
\item Both vectors $T_{\F}$ and $C_{\F}$ have non-negative entries.
\item Since for a non-tangent saddle node $q \in E_j$ we have $\mu_q(\bar\F,E_j)=1$, we have 
\[T_\F=\left(\sum\limits_{q\in SN(\bar\F)\cap E_1}(\mu_q(\bar\F,E_1)-1),\ldots,\sum\limits_{q\in SN(\bar\F)\cap E_n}(\mu_q(\bar\F,E_n)-1)\right)^\mathsf{T}.\]
\item
$\F$ is of second type if and only if $T_\F=0$ (i.e. $S_\F=S_{\mc B}$),
\item
 $\F$ is CND if and only if $C_\F=0$ (i.e. $S_\F=S_{\mc B'}$).
\item It is clear that $\langle \rho_E, T_{\F}\rangle = \tau_0(\F)$.
 \end{enumerate}
\end{remark} 
 
\begin{remark}\label{F}
 If $\pi_i$ is the blow-up of a point $p_i\in(\pi_{1}\circ\cdots\circ\pi_{i-1})^{-1}(p_0)$ for $i=2,\ldots,n$ then
\[F(\pi_1\circ\cdots\circ\pi_n)=\begin{pmatrix} 1 & 0\\ f_n & F(\pi_2\circ\cdots\circ\pi_n)\end{pmatrix} \Rightarrow
F(\pi_1\circ\cdots\circ\pi_n)^{-1}=\begin{pmatrix} 1 & 0\\ g_n & F(\pi_2\circ\cdots\circ\pi_n)^{-1}\end{pmatrix}.\]
The first column $(1,g_n)^\mathsf{T}$ of $(F(\pi_n\circ\cdots\circ\pi_1))^{-1}$ is the vector of weights $\rho(\pi_1\circ\cdots\circ\pi_n)=(\rho_{E_1},\ldots,\rho_{E_n})$ associated to $p_0$. Then the second column of $(F(\pi_1\circ\cdots\circ\pi_n))^{-1}$ is of the form $(0,\rho(\pi_2\circ\cdots\circ\pi_n))^\mathsf{T}$ associated to $p_1$ and so on. 
\end{remark}
 
\begin{proposition}\label{tau y T_F}
For a reduction of singularities $\pi:(M,E)\to (\C^2,0)$ of $\F$ we have  
\[(F^{-1})^\mathsf{T}T_\F=\tau_\F.\]
\end{proposition}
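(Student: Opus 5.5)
We compare the two sides coordinate by coordinate. By definition, the $(j+1)$-th entry of $\tau_\F$ is $\tau_{p_j}(\bar\F_j)$, the tangency excess of the foliation $\bar\F_j$ at the blow-up center $p_j$ (with $p_0=0$, $\bar\F_0=\F$). Our strategy is to identify the $(j+1)$-th entry of $(F^{-1})^\mathsf{T}T_\F$ with the same sum over tangent saddle-nodes. The weights for this sum should be those attached to the sub-sequence of blow-ups infinitely near $p_j$.

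The key input is Remark~\ref{F}, together with Corollary~\ref{weights}. The $(j+1)$-th column of $F^{-1}$ is $(0,\ldots,0,\rho(\pi_{j+1}\circ\cdots\circ\pi_n))^\mathsf{T}$. Its entry in position $i$ is the weight $\rho^{(j)}_{E_i}$ of $E_i$ computed relative to the base point $p_j$. This entry is zero when $E_i$ does not lie over $p_j$, i.e. $E_i$ is not obtained by blowing up points infinitely near $p_j$. To justify the zero entries in general (not only for consecutive centers), we note the following. $F$ is lower triangular, and $e_{k,j}\neq 0$ only if $p_{k-1}$ lies on $E_j$, hence over $p_{j-1}$. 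An induction on the inductive definition of the weights then shows that $(F^{-1})_{i,j+1}$ satisfies the same recursion as $\rho^{(j)}_{E_i}$, with initial value $1$ at $E_{j+1}$.

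It follows that
\[
\big((F^{-1})^\mathsf{T}T_\F\big)_{j+1}=\sum_{i}\rho^{(j)}_{E_i}\sum_{q\in TSN(\bar\F)\cap E_i}\big(\mu_q(\bar\F,E_i)-1\big).
\]
Only components over $p_j$ contribute to this sum. We then reorder the sum as a sum over $q$ and over the components $E_i\ni q$, which matches the double sum in the definition of $\tau$. The remaining point is that the restriction $\pi_{j+1}\circ\cdots\circ\pi_n$, localized near $p_j$, is a reduction of singularities of $\bar\F_j$ at $p_j$. Under this restriction:
\begin{itemize}
\item its exceptional components are exactly the $E_i$ with $i>j$ lying over $p_j$;
\item the tangent saddle-nodes of that reduction are exactly the points of $TSN(\bar\F)$ on those components;
\item the multiplicities $\mu_q(\bar\F,E_i)$ are local at $q$, so they are unchanged.
\end{itemize}
A small check is needed for tangent saddle-nodes at a corner $E_i\cap E_k$ where $E_k$ does not lie over $p_j$. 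Such a point cannot be a tangent saddle-node for the localized reduction with respect to $E_k$. However, its term for $E_k$ carries weight $\rho^{(j)}_{E_k}=0$, so it drops out on both sides. Using the remark after the definition of $\tau_0$, the value $\tau_{p_j}(\bar\F_j)$ does not depend on the chosen reduction. This gives the equality in each coordinate.

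The main obstacle is the bookkeeping in the second step. We must justify rigorously that the columns of $F^{-1}$ are the relative weight vectors, with zeros off the set of components over $p_j$, when the centers are not all infinitely near one another in a chain. I would prove this by induction on $n$ via the block form of $F_n$. We have $F_n^{-1}=\begin{pmatrix}F_{n-1}^{-1}&0\\ e_nF_{n-1}^{-1}&1\end{pmatrix}$, so the last row of $F_n^{-1}$ is the sum of the rows of $F_{n-1}^{-1}$ indexed by the (one or two) divisors containing $p_{n-1}$. This is exactly the weight recursion, applied simultaneously for every base point $p_j$.
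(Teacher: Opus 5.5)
Your proposal is correct and follows the same route as the paper. You compare the two sides coordinatewise, identify the $(j+1)$-th column of $F^{-1}$ with the weight vector relative to $p_j$ (Remark~\ref{F}, Corollary~\ref{weights}), and read off $\tau_{p_j}(\bar\F_j)=\langle\rho^{(j)},T\rangle$ for the localized reduction. Your block-inverse induction and the corner discussion only make explicit what the paper covers with ``clearly'' and ``analogously''.

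One small addition: in the corner case $q=E_i\cap E_k$ whose weak separatrix is $E_k$, the $E_i$-term also vanishes, because $\mu_q(\bar\F,E_i)=1$ along the strong separatrix. This is what makes the global and local sums agree even though such a $q$ is not a local tangent saddle-node.
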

 
\begin{proof}
For the first component of the vectors we have: $\tau_0(\F)= \langle \rho_E, T_{\F} \rangle = \langle (F^{-1})^\mathsf{T}T_\F , \varepsilon_1\rangle $, where the last equality follows from Corollary \ref{weights}. We verify now the equality in the second component. If $T_\F(\pi_1\circ\cdots\circ\pi_n)=(t_1,\ldots,t_n)^\mathsf{T}$ then clearly $T_{\bar\F_1}(\pi_2\circ\cdots\circ\pi_n)=(t_2,\ldots,t_n)^\mathsf{T}$. Therefore 
$$
\langle (F^{-1})^\mathsf{T}T_\F , \varepsilon_2\rangle= \langle (0,\rho(\pi_2\circ\cdots\circ\pi_n))^\mathsf{T}, T_{\F} \rangle = \tau_{p_1}(\bar\F_1),
$$
where the first equality follows from previous remark. The remaining components are treated analogously.
\end{proof} 

We denote by $\nu_0(\F)$ the algebraic multiplicity of $\F$ at $0$.
In \cite{Genzmer} second type foliations are characterized (see also \cite[Proposition 3.3]{G-M}):

\begin{proposition}[\cite{Genzmer}, Proposition 2.4]\label{Genzmer}
Let $\B$ a balanced divisor for $\F$, then $$\nu_0(\F)=\nu_0(\B)-1 + \tau_0(\F).$$
In particular $\F$ is of second type if and only if $\nu_0(\F)=\nu_0(\B)-1$.
\end{proposition}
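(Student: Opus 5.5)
We argue by induction on the number $n$ of blow-ups in the reduction of singularities $\pi$, following the classical approach of Camacho--Lins Neto--Sad and Genzmer. The engine is a single blow-up formula comparing the algebraic multiplicity of $\F$ at $0$ with data on $E_1$ and on the strict transform $\bar\F_1=\pi_1^*\F$.

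\textbf{Single blow-up relation.} For $\nu=\nu_0(\F)$ and $\iota_1$ as before, we have $\ell_1=\nu+1-\iota_1$. If $E_1$ is invariant, the sum over $p\in E_1$ of $\mu_p(\bar\F_1,E_1)$ equals $\nu+1$; this is the Camacho--Sad type count, i.e.\ the degree of the restricted foliation on $E_1\cong\mathbb P^1$. If $E_1$ is dicritical, the tangency order of $\bar\F_1$ with $E_1$ gives the analogous count with $\nu$ in place of $\nu+1$, and $\nu=\mathrm{tang}+\ldots$.

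\textbf{Reduction to a local identity.} By linearity of $\nu_0$ on divisors, $\nu_0(\B)=\sum_{p\in E_1}\nu_0$-contributions. Concretely, $\nu_0(\B)=\bar\B\cdot E_1$ computed on the first blow-up, and each point $p\in E_1$ contributes $(\bar\B_1)_p\cdot E_1$. Here $(\bar\B_1)_p$ denotes the germ at $p$ of the strict transform, which is a balanced divisor for $\bar\F_1$ at $p$ after adding $E_1$ if $E_1$ is invariant and $p$ is singular. The key local claim is
\[
\mu_p(\bar\F_1,E_1)-1=(\bar\B_1)_p\cdot E_1-1+\tau'_p,
\]
where $\tau'_p$ is the portion of the tangency excess at $p$ coming from tangent saddle-nodes whose weak separatrix is $E_1$ or one of its strict transforms. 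This is the formula $\mu_0(\F,C)=i_0(\B,C)-\ldots$, i.e.\ the multiplicity of $\F$ along a separatrix expressed through the balanced divisor plus tangent saddle-node corrections, applied to $C=E_1$ at $p$. We prove it by induction on the number of blow-ups needed at $p$, using Remark~\ref{BSN} for saddle-nodes.

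\textbf{Summation and main obstacle.} Summing over $p\in E_1$ and using the weight recursion $\rho$ to rewrite $\sum_p\tau'_p$ as $\tau_0(\F)$ yields $\nu_0(\F)=\nu_0(\B)-1+\tau_0(\F)$. The main obstacle is the dicritical case: there the valence and the balance condition (b) must be used to account for the $2-\mathrm{val}_D$ terms. The ``in particular'' statement then follows from $\tau_0(\F)\ge 0$, with equality exactly when $TSN=\emptyset$. This holds because every tangent saddle-node has $\mu_q(\bar\F,E_i)-1\ge1$ along its weak separatrix divisor component and all weights satisfy $\rho>0$.
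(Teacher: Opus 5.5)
There is a genuine gap. The paper does not prove this statement; it quotes it from Genzmer, so your argument has to stand on its own. Your reduction is fine: blow up once, use $\sum_p\mu_p(\bar\F_1,E_1)=\nu_0(\F)+1$ for invariant $E_1$, and use $\nu_0(\B)=\sum_p(\bar\B_1)_p\cdot E_1$.

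The local identity that carries all the content, however, is misstated. The correction $\tau'_p$ cannot be restricted to tangent saddle-nodes whose weak separatrix is a strict transform of $E_1$. Take the Dulac foliation of Example~\ref{Dulac}.
\begin{itemize}
\item In the chart $x=sy$ the first blow-up gives $-s\,dy+y\,ds-y\,dy$, with $E_1=\{y=0\}$. At the non-reduced point $p=(0,0)$ one has $\mu_p(\bar\F_1,E_1)=1$, and no branch of $\B=\{y=0\}$ passes through $p$.
\item Blowing up $p$ gives, in the chart $y=vs$, the form $-v^2\,ds-s(1+v)\,dv$. Its only singularity is a corner saddle-node with strong separatrix $\bar E_1$, weak separatrix $E_2$, and $\mu_q(\bar\F,E_2)=2$.
\item Your $\tau'_p$ is therefore $0$, and your identity reads $0=-1$.
\end{itemize}
The correct correction is the whole contribution to $\tau_0(\F)$ of the tangent saddle-nodes lying over $p$, namely $\sum_q\sum_{E_i\ni q}\rho_{E_i}(\mu_q(\bar\F,E_i)-1)$ with the global weights (here it equals $1$). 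These weights differ from those of the local reduction at $p$. The sum also contains corner saddle-nodes on $\bar E_1$ with weak separatrix $\bar E_1$, which are non-tangent for the local reduction. So it is not the tangency excess $\tau_p$ of the germ at $p$ either.

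Even corrected, this identity is as deep as the proposition itself. ``Induction on the number of blow-ups needed at $p$, using Remark~\ref{BSN}'' does not supply the mechanism. You also cannot quote it from Theorem~\ref{Indices}, since the paper derives that from Theorem~\ref{ell}, which rests on this very proposition.

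For a smooth invariant curve, Brunella's GSV formula gives $\mu_q(\mathcal G,C)=\mu_{q'}(\bar{\mathcal G},\bar C)+\nu_q(\mathcal G)-\iota$. So following $E_1$ through the blown-up points $q_0=p,q_1,\dots,q_m$ of its strict transforms reintroduces the multiplicities $\nu_{q_j}$. To close the argument you would need to:
\begin{itemize}
\item apply the proposition itself, as induction hypothesis, at every $q_j$, with balanced divisor the strict transform of $\B$ plus the invariant components through $q_j$;
\item telescope the $\iota$'s;
\item use Noether's formula for $i_p(\bar\B_1,E_1)$;
\item check that the local excesses $\tau_{q_j}$, plus the possible corner term at the last point of $\bar E_1$ over $p$, add up to the global contribution over $p$.
\end{itemize}
The last step works because the global weights over $p$ are the vanishing orders of an equation of $E_1$. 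That vector is $F_p^{-1}\nu(E_1)$, where $F_p$ is the proximity matrix of the blow-ups over $p$, and it is the sum of the local weight vectors at the $q_j$ (Remark~\ref{F}).

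The dicritical case is only announced, and your count there is off by one. The total tangency of $\bar\F_1$ with a dicritical $E_1$ is $\nu_0(\F)-1$, not $\nu_0(\F)$; the radial foliation has $\nu_0=1$ and no tangency. What is needed is $\mathrm{tang}_p(\bar\F_1,E_1)=(\bar\B_1)_p\cdot E_1-1+\tau'_p$ at each of the $\mathrm{val}_{E_1}$ points of $E_1$ that get blown up, combined with $\bar\B\cdot\bar E_1=2-\mathrm{val}_{E_1}$. Your deduction of the ``in particular'' statement from $\tau_0(\F)\ge 0$ is fine.
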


Now we are ready to state our first main result.

 \begin{theorem}\label{ell}
Let $\F$ be an arbitrary foliation on $(\C^2,0)$, let $\pi$  be a reduction of singularities of $\F$ and let  $\mc B$  be a balanced divisor adapted to $\pi$. Then we have
 \[\ell=(F^{-1})^\mathsf{T}S_\F-F\iota = \nu_{\B}+\tau_\F -F\iota. \]
In particular $\ell=(F^{-1})^\mathsf{T}S_{\mc B}-F\iota$ if and only if $\F$ is of second type.
\end{theorem}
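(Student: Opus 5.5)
We argue componentwise, using the description of discrepancies $\ell_j=\nu_{p_{j-1}}(\bar\F_{j-1})+1-\iota_j$, where $\bar\F_{j-1}$ is the transform of $\F$ by $\pi_1\circ\cdots\circ\pi_{j-1}$ (with $\bar\F_0=\F$). The second equality in the statement is formal. Since $S_\F=S_{\B}+T_\F$, we have $(F^{-1})^\mathsf{T}S_\F=(F^{-1})^\mathsf{T}S_{\B}+(F^{-1})^\mathsf{T}T_\F$. By Lemma~\ref{multiplicidades de curva}, extended linearly to divisors, the first term is $\nu_{\B}$, and by Proposition~\ref{tau y T_F} the second is $\tau_\F$. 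So it suffices to prove $\ell=\nu_{\B}+\tau_\F-F\iota$ entrywise.

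Fix $j$. The strict transform $\bar\B_{j-1}$ of $\B$ by $\pi_1\circ\cdots\circ\pi_{j-1}$ is a balanced divisor for $\bar\F_{j-1}$ at $p_{j-1}$, adapted to the reduction $\pi_j\circ\cdots\circ\pi_n$, once we add the local germs of the previous exceptional components $E_i$ with $e_{j,i}=1$ that are invariant. I would verify this carefully. Isolated separatrices of $\bar\F_{j-1}$ at $p_{j-1}$ are either strict transforms of isolated separatrices of $\F$ or invariant components $E_i\ni p_{j-1}$. The balancing condition on dicritical components downstream is unchanged, since valences are computed in the same total divisor $E$. Write this local balanced divisor as $\B^{(j)}=\bar\B_{j-1}+\sum_{i<j,\,e_{j,i}=1}\iota_i E_i$. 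Applying Proposition~\ref{Genzmer} at $p_{j-1}$ gives
\[
\nu_{p_{j-1}}(\bar\F_{j-1})=\nu_{p_{j-1}}(\bar\B_{j-1})+\sum_{i<j}e_{j,i}\iota_i-1+\tau_{p_{j-1}}(\bar\F_{j-1}).
\]
The tangency excess here is the $j$-th entry of $\tau_\F$. It is computed with respect to $\pi_j\circ\cdots\circ\pi_n$ and the weights of Remark~\ref{F}, and by Remark~\ref{BSN} it does not matter that this reduction may be non-minimal.

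Substituting, $\ell_j=\nu(\B)_j+\tau_{\F,j}+\sum_{i<j}e_{j,i}\iota_i-\iota_j$. Since $F$ has $1$ on the diagonal and $-e_{j,i}$ below it, this equals $\nu(\B)_j+\tau_{\F,j}-(F\iota)_j$, which is the required identity.

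For the ``in particular'' part, the identity shows that $\ell=(F^{-1})^\mathsf{T}S_{\B}-F\iota$ holds if and only if $(F^{-1})^\mathsf{T}T_\F=0$. Because $F$ is invertible, this is equivalent to $T_\F=0$, that is, to $\F$ being of second type.

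The main obstacle is the localization step: justifying that Proposition~\ref{Genzmer} applies at each $p_{j-1}$ with the balanced divisor $\B^{(j)}$. Two points need care here. First, when $p_{j-1}$ lies on a dicritical $E_i$, that component is not invariant, so it must not be counted, which is why the coefficient $\iota_i$ appears. Second, at corner saddle-nodes the weak separatrix lies in $E$, so one must check that the local isolated separatrices match the convention in the footnote on formal weak separatrices, and that the tangent saddle-node contributions are accounted for correctly by $\tau$ rather than by the divisor.
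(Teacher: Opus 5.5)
Your argument is correct, but it is organized differently from the paper's. The paper proves the first equality $\ell=(F^{-1})^{\mathsf T}S_\F-F\iota$ by induction on the number of blow-ups. It blows up the origin once and splits $F$, $F^{-1}$ and $S_\F$ into blocks indexed by the points $p_1,\dots,p_r\in E_1$ blown up afterwards. It applies Genzmer's proposition only at the origin, and the induction hypothesis at each $p_i$ with the balanced divisor $\bar{\B}_i+\iota_1E_1$. Reassembling the blocks then requires the auxiliary identity $S^i_{E_1}=-F_i^{\mathsf T}f_i$; the form $\nu_\B+\tau_\F-F\iota$ is deduced only afterwards.

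You instead prove $\ell=\nu_\B+\tau_\F-F\iota$ directly, row by row. Row $j$ is literally Genzmer's formula at $p_{j-1}$, and $-(F\iota)_j=\sum_{i<j}e_{j,i}\iota_i-\iota_j$ is the contribution of the smooth invariant components through $p_{j-1}$ that you add to the local balanced divisor, together with the $1-\iota_j$ in $\ell_j$. This flattens the induction, makes each entry transparent, and avoids both the block bookkeeping and the identity $S^i_{E_1}=-F_i^{\mathsf T}f_i$. The price is that you must localize at an arbitrary center, possibly a corner with two earlier components through it, whereas the paper only localizes at points of $E_1$. The paper's recursion implicitly does the same thing, so nothing essential is lost.

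One justification in your localization step is wrong as stated, although the conclusion is true. Valences are \emph{not} computed in the same divisor: for the germ at $p_{j-1}$ only components over $p_{j-1}$ count. So for a dicritical $D$ infinitely near $p_{j-1}$,
\[
\mathrm{val}^{loc}_D=\mathrm{val}_D-\#\{i<j:\ p_{j-1}\in E_i,\ \bar E_i\cdot D=1\}.
\]
For instance, in Example~\ref{Ejemplo-David}, at $p_1$ the component $E_2$ has valence $1$ globally but $0$ locally.

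What rescues the balancing condition is a compensation. Each such $E_i$ is invariant, since dicritical components do not meet in a reduction. So it appears in $\B^{(j)}$ with coefficient $\iota_i=1$, and locally it is a dicritical (not isolated) separatrix of $D$. Hence $\sum_B a_B\bar B\cdot D$ grows by exactly the amount by which $2-\mathrm{val}_D$ grows. Replace your sentence with this computation. You should also state explicitly that the local reduction consists only of the blow-ups whose centers are infinitely near $p_{j-1}$.
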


\begin{proof}
The second equality follows from Proposition \ref{tau y T_F} together with the fact that $(F^{-1})^\mathsf{T} S_{\B}=\nu(\B)$ by Lemma \ref{multiplicidades de curva}. We establish the first equality by induction on the number $n$ of blow-ups in the reduction $\pi$ of $\F$.

If $n=1$ we have, by Genzmer's proposition, $\ell_1 = \nu_0(\F)+1-\iota_1 =\nu_0(\B) + \tau_0(\F)-\iota_1$. 
Assume now that the result holds for $n-1$ and let us prove it for $n$. 
Let $\pi_0$ be the blow-up of the origin and consider the set $\{p_1,\ldots,p_r\}\subset E_1=\pi_0^{-1}(0)$ of singular points of $\pi_0^*\F$ that we must blow up to obtain the  given reduction $\pi=\pi_0\circ\cdots\circ \pi_{r-1}\circ\pi_r$  of singularities of $\F$. Notice that $\pi_i$ is a composition of blow-ups starting at $p_i$, $i=1,\ldots,r$.
As before we denote $\bar\F=\pi^*\F$. Let us denote by $\F_i$ the germ of $\pi_0^*\F$ at $p_i$. We decompose the balanced divisor $\mc B=\sum_{i=0}^r\mc B_i$ so that the strict transform of each component of $\mc B_i$ by $\pi_0$ passes through the point $p_i$ for $i=1,\ldots,r$ and the strict transform of each component of $\mc B_0$ does not pass through any $p_i$.
With the obvious notations, notice that the vectors $\ell_\F$, $T_\F$, $\tau_\F$, $\iota_\F$  and $S_{\mc B}$ associated to $\F$ and $\pi$ can be written as
\begin{align*}
\ell_\F&=(\nu_0(\F)+1-\iota_1,\ell_{\F_1},\ldots,\ell_{\F_r})^\mathsf{T},\\
T_\F&=\left(\sum_{q\in TSN(\bar\F)\cap E_1}(\mu_q(\bar\F,E_1)-1),T_{\F_1},\ldots,T_{\F_r}\right)^\mathsf{T},\\
\tau_\F&=(\tau_0(\F),\tau_{\F_1},\ldots,\tau_{\F_r})^\mathsf{T},\\
\iota_\F&=(\iota_1,\iota_{\F_1},\ldots,\iota_{\F_r})^\mathsf{T},\\
S_{\mc B}&=(\bar{\mc B}\cdot E_1,S_{\mc B_1},\ldots,S_{\mc B_r})^\mathsf{T}.
\end{align*}
On the other hand, the proximity matrix  $F$ of $\pi$ and its inverse $F^{-1}$ take the form
\[F=\left(\begin{array}{cccc} 1 & 0 & \cdots & 0\\ 
f_1 & F_1 & \cdots & 0\\
\vdots & \vdots & \ddots & \vdots\\
f_r & 0 & \cdots & F_r
\end{array}\right),\qquad 
F^{-1}=\left(\begin{array}{cccc} 1 & 0 & \cdots & 0\\ 
\rho_1 & F_1^{-1} & \cdots & 0\\
\vdots & \vdots & \ddots & \vdots\\
\rho_r & 0 & \cdots & F_r^{-1}
\end{array}\right),
\]
where the first column of $F^{-1}$ is the vector of weights associated to $\pi$. In fact, each vector $\rho_i$ is the vector of weights associated to the composition $\pi_i$ of blow-ups in $\pi$ reducing the singularity $p_i$ of $\pi_0^*\F$. 
Notice that if $\bar{\mc B}_i$ is the strict transform of $\mc B_i$ by $\pi_0$ then
 $\bar{\mc B}_i+\iota_1E_1$ is a balanced divisor for $\F_i$ adapted to $\pi_i$ for $i=1,\ldots,r$.
By the inductive hypothesis we have
$\ell_{\F_i}=(F_i^{-1})^\mathsf{T}(S_{\mc B_i}+\iota_1 S^i_{E_1}+T_{\F_i})-F_i\iota_{\F_i}$,
where $S^i_{E_1}=(E_1\cdot E_{i1},\ldots,E_1\cdot E_{in_i})^\mathsf{T}$ and $\pi_i^{-1}(p_i)=\bigcup_{j=1}^{n_i}E_{ij}$.
We have that
\begin{align*}
(F^{-1})^\mathsf{T}S_\F-F\iota_\F=&\left(\begin{array}{cccc} 1 & \rho_1^\mathsf{T} & \cdots & \rho_r^\mathsf{T}\\ 
0 & (F_1^{-1})^\mathsf{T} & \cdots & 0\\
\vdots & \vdots & \ddots & \vdots\\
0 & 0 & \cdots & (F_r^{-1})^\mathsf{T}
\end{array}\right)\left(\begin{array}{c}
\bar{\mc B}\cdot E_1+\sum\limits_{q\in TSN(\bar\F)\cap E_1}(\mu_q(\bar\F,E_1)-1)\\
S_{\mc B_1}+T_{\F_1}\\
\vdots\\
S_{\mc B_r}+T_{\F_r}
\end{array}\right)\\
& -\left(\begin{array}{cccc} 1 & 0 & \cdots & 0\\ 
f_1 & F_1 & \cdots & 0\\
\vdots & \vdots & \ddots & \vdots\\
f_r & 0 & \cdots & F_r
\end{array}\right)\left(\begin{array}{c}\iota_1\\ \iota_{\F_1}\\ \vdots\\ \iota_{\F_r}\end{array}\right)\\
=&\left(\begin{array}{c}\bar{\mc B}\cdot E_1+\sum\limits_{i=1}^r\rho_i^\mathsf{T}S_{\mc B_i}+\tau_0(\F)-\iota_1\\
(F_1^{-1})^\mathsf{T}(S_{\mc B_1}+T_{\F_1})-\iota_1f_1-F_1\iota_{\F_1}\\
\vdots\\
(F_r^{-1})^\mathsf{T}(S_{\mc B_r}+T_{\F_r})-\iota_1f_r-F_r\iota_{\F_r}
\end{array}\right)\stackrel{(\star)}{=}\left(\begin{array}{c}
\nu_0(\mc B)+\tau_0(\F)-\iota_1\\
\ell_{\F_1}\\
\vdots\\
\ell_{\F_r}
\end{array}\right)=\ell_\F,
\end{align*}
where in the  second equality we note that $\sum_{q\in TSN\cap E_1}(\mu_q(\bar\F)-1)+\sum_{i=1}^r \rho_i^\mathsf{T}T_{\F_i}=\tau_0(\F)$,  in the last equality we use Proposition~\ref{Genzmer}, and in equality  ($\star$) 
we note that $\bar{\mathcal B}\cdot E_1+\sum_{i=1}^r\rho_i^\mathsf{T}S_{\mathcal B_i}=\nu_0(\mathcal B)$ for the first component and use that $S_{E_1}^i=-F_i^\mathsf{T}f_i$ for $i=1,\ldots,r$ for the remaining components.This fact can be checked as follows:
assume that after blowing-up the point $p_i$ the next centers are $E_1\cap E_{11}, E_1\cap E_{12},\ldots, E_1\cap E_{1k}$ with $1\le k\le n_i$, and the remaining centers are not longer on the strict transform of $E_1$ then the unique non-zero component of $S_{E_1}^i$ is a $1$ in the position $k$.
On the other hand,
\[\left(\begin{array}{c|c}1 & 0\\ \hline  f_i & F_i\end{array}\right)=\left(\begin{array}{c|cc}
1 & 0 & 0\\ \hline -u & G_1 & 0\\ 0 & G_2 & G_3
\end{array}\right),\]
where $u=(-1,\ldots,-1)^\mathsf{T}$ is a vector of $k$ components, $G_1$ is the $k\times k$ matrix
\[G_1=\left(\begin{array}{ccccc}
1 & 0 & 0 &  \cdots & 0\\
-1 & 1 & 0 & \cdots & 0\\
0 & -1 & 1 & \cdots & 0\\
\vdots & \vdots  & \ddots & \ddots & \vdots\\
0 & 0 & \cdots & -1 & 1
\end{array}\right)\]
and $G_3$ is a lower triangular square matrix.
Then the unique non-zero component of the vector $-F_i^\mathsf{T}f_i=\left(\begin{array}{cc}
G_1^\mathsf{T} & G_2^\mathsf{T}\\ 0 & G_3^\mathsf{T}\end{array}\right)\left(\begin{array}{c}u\\ 0
\end{array}\right)
=\left(\begin{array}{c}G_1^\mathsf{T} u\\ 0\end{array}\right)$ is a $1$ in the position $k$. Hence $-F_i^\mathsf{T}f_i=S_{E_1}^i$.
\end{proof}

\begin{example}
For the foliation of Example \ref{Ejemplo-David}, 
$$
\F : \,\, \omega = (xy+x^2y-x^2-y^2)ydx-(x-1)x^3dy
$$
we have seen that the reduction of singularities has a tangent saddle-noe $q$ at the first divisor, thus we can easily see that 
$$
T_\F=\left(\begin{array}{c} \mu_q(\bar\F,E_1)-1\\ 0\end{array}\right)=\left(\begin{array}{c}1\\ 0\end{array}\right).
$$
Therefore 
$$
(F^{-1})^\mathsf{T}S_{\mc F}-F\iota =\left(\begin{array}{c}3\\ 2\end{array}\right)= \ell.
$$
\end{example}

We consider now the vector of algebraic multiplicities
\[
\nu_\F := (\nu_0(\F),\nu_{p_1}(\bar{\F}_{p_1}),\ldots,\nu_{p_{n-1}}(\bar{\F}_{p_{n-1}}))^\mathsf{T}.
\]
Since $\ell_j=\nu_{p_j}(\bar{\F}_j)+1-\iota_j,$ it follows that
\[
\ell=\nu_\F+u-\iota.
\]
Hence we obtain the following corollary.

\begin{corollary}
For a foliation $\F$ on $(\C^2,0)$ one has
\[
\nu_{\F}=  (F^{-1})^\mathsf{T}S_\F - F\iota - \delta.
\]
\end{corollary}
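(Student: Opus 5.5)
The corollary follows by substituting Theorem~\ref{ell} into the componentwise relation between discrepancies and algebraic multiplicities. So the plan has only two steps.

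First, I will justify the identity $\ell=\nu_\F+u-\iota$ entry by entry. By definition, $\ell_j$ is the vanishing order of $\pi_j^*\omega_j$ along $E_j$, where $\omega_j$ defines the foliation near the center $p_{j-1}$ of the $j$-th blow-up. In the affine chart $(x,tx)$, the pull-back of a $1$-form of algebraic multiplicity $\nu$ is divisible by $x^{\nu}$. It is divisible by $x^{\nu+1}$ exactly when the degree-$\nu$ homogeneous part of the form annihilates the radial vector field, which is the dicritical case $\iota_j=0$. This gives $\ell_j=\nu_{p_{j-1}}+1-\iota_j$, as recalled in the text before the statement. The only bookkeeping point is the indexing convention: the first entry of $\nu_\F$ is $\nu_0(\F)=\nu_{p_0}$, so $\ell_j$ is paired with the multiplicity at $p_{j-1}$ throughout. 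In vector form this reads $\ell=\nu_\F+u-\iota=\nu_\F+\delta$, using $\delta=u-\iota$.

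Second, I will invoke Theorem~\ref{ell}, which gives $\ell=(F^{-1})^\mathsf{T}S_\F-F\iota$ for any reduction of singularities $\pi$ and any balanced divisor adapted to it. Equating the two expressions for $\ell$ and subtracting $\delta$ yields
\[
\nu_\F=(F^{-1})^\mathsf{T}S_\F-F\iota-\delta .
\]

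There is no real obstacle, since all the substantive work is contained in Theorem~\ref{ell}. The only care needed is in the first step: the relation $\ell_j=\nu_{p_{j-1}}+1-\iota_j$ must be understood for the local foliation at each center $p_{j-1}$, which is how $\nu_\F$ is defined.
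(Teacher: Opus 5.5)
Your proposal is correct and follows the paper's own argument: the paper also combines $\ell=\nu_\F+u-\iota=\nu_\F+\delta$ (from $\ell_j=\nu_{p_{j-1}}+1-\iota_j$) with Theorem~\ref{ell} and subtracts $\delta$. Your radial-vector-field check of $\ell_j=\nu_{p_{j-1}}+1-\iota_j$ and your note on the indexing of $\nu_\F$ are correct. They simply make explicit what the paper takes for granted, where it writes the index as $p_{j-1}$ in one place and $p_j$ in another.
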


As a consequence of the previous theorem we obtain a formula for the Milnor number from the reduction data, generalizing \cite[Theorem 2.8]{FFMR}.

\begin{remark}
Notice that the Milnor number of a saddle-node $x^kdy+(\lambda y+\cdots)dx$ is $k$.
\end{remark}
Before state our second theorem, we need to define, for any $\F$-invariant curve $C$, the \textbf{transverse excess} of $\F$ over $C$ by
\[\mathfrak{T}_\F(C)=\sum_{p\in \bar C\cap SN(\bar\F)\setminus TSN(\bar\F)}(\mu_p(\bar\F)-1).\]
We extend by linearity $\mathfrak{T}_\F$ for arbitrary divisors of separatrices. Notice that for a balanced divisor $\B$, we have $\mathfrak{T}_\F(\mc B)=\mathfrak{T}_\F(\mc I)$.

\begin{theorem}\label{muF}
 Let $\F$ be an arbitrary foliation on $(\C^2,0)$, let $\pi$ be a reduction of singularities of $\F$ and let $\mc B$ be a balanced divisor adapted to $\pi$. Then
\[
\mu_0(\F)=\langle-A^{-1}S_\F -(I+F^{-1})u   ,S_\F\rangle
+1+
\mathfrak{T}_\F(\mc B).
\]
In particular
\begin{enumerate}
\item If $\F$ is of second type, then $$\mu_0(\F)=\langle-A^{-1}S_\B -(I+F^{-1})u   ,S_\B\rangle
+1+ \sum\limits_{p\in SN(\bar\F)}(\mu_p(\bar\F)-1).$$
\item If $\F$ is a generalized curve, then  $$\mu_0(\F)=\langle-A^{-1}S_\B -(I+F^{-1})u   ,S_\B\rangle
+1.$$
\end{enumerate}
\end{theorem}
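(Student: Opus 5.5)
The plan is to use the classical blow-up formula for the Milnor number together with Theorem~\ref{ell}. For a single blow-up $\pi_1$ at $0$ with multiplicity $\nu=\nu_0(\F)$, one has
\[
\mu_0(\F)=\nu^2-(\nu+1)\quad\text{(non-dicritical)},\qquad \mu_0(\F)=(\nu+1)^2-(\nu+2)\quad\text{(dicritical)},
\]
plus $\sum_{q\in \mathrm{Sing}(\pi_1^*\F)\cap E_1}\mu_q(\pi_1^*\F)$. Writing $\ell_1=\nu+1-\iota_1$ turns this into the uniform formula $\mu_0(\F)=\ell_1^2-\ell_1-1+\sum_q\mu_q$, which is the standard one from \cite{CLS}.

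Iterating along the whole reduction $\pi$ gives
\[
\mu_0(\F)=\sum_{j=1}^n(\ell_j^2-\ell_j-1)+\sum_{q\in \mathrm{Sing}(\bar\F)}\mu_q(\bar\F).
\]
Since every singularity of $\bar\F$ is reduced, the last sum equals $\#\mathrm{Sing}(\bar\F)+\sum_{q\in SN(\bar\F)}(\mu_q(\bar\F)-1)$. I then split the saddle-node sum into tangent and non-tangent ones.

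For a non-tangent saddle-node $q$, the strong separatrix lies in $E$, so the weak separatrix is transverse and is the strict transform of an isolated separatrix in $\mc B$. Its Milnor number contributes exactly to $\mathfrak{T}_\F(\mc B)$.

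For a tangent saddle-node on $E_i$, the weak separatrix lies in $E_i$, so $\mu_q(\bar\F,E_i)=\mu_q(\bar\F)=k$. Its excess $k-1$ is exactly the entry of $T_\F$ at that component. For a corner saddle-node it is counted with respect to the component carrying the weak direction; there $\mu_q(\bar\F,E_j)=1$ for the other component, so nothing is double counted.

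Next I count $\#\mathrm{Sing}(\bar\F)$ combinatorially. The number of corners is $n-1$ minus corrections from dicritical components. The non-corner singularities on an invariant $E_j$ number $\bar{\mc I}\cdot E_j$. Combining this with the balanced condition on dicritical components gives $\#\mathrm{Sing}(\bar\F)$ as a linear expression in $S_{\mc B}$, $\iota$ and $A$ (through the valences $-\sum_{k\ne j}A_{jk}$).

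The main step is then pure linear algebra. I substitute $\ell=(F^{-1})^\mathsf{T}S_\F-F\iota$ from Theorem~\ref{ell} and use $A=-F^\mathsf{T}F$, so that $\sum\ell_j^2=\langle -A^{-1}S_\F,S_\F\rangle-2\langle S_\F,\iota\rangle+\|F\iota\|^2$. I also need $\sum_j\ell_j=\langle F^{-1}u,S_\F\rangle-\langle F\iota,u\rangle$, since $\langle (F^{-1})^\mathsf{T}S_\F,u\rangle=\langle S_\F,F^{-1}u\rangle$. After that I must check that all the $\iota$-dependent terms ($\|F\iota\|^2$, $\langle F\iota,u\rangle$, $\langle S_\F,\iota\rangle$, the corner count, $-n$) cancel against $-\langle u,S_\F\rangle$, $+1$, and the saddle-node terms, leaving exactly
\[
\langle-A^{-1}S_\F-(I+F^{-1})u,S_\F\rangle+1+\mathfrak{T}_\F(\mc B).
\]
This cancellation is the expected obstacle. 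A useful shortcut is that it is identical to the generalized-curve case of \cite[Theorem 2.8]{FFMR} with $S_{\mc B}$ replaced by $S_\F$, where the identity is already known. So I would only verify that the new terms $T_\F$, which enter both $\ell$ and the saddle-node count, balance. Concretely, the cross terms involving $T_\F$ in $\|\ell\|^2-\sum_j\ell_j$ must match $\langle T_\F,u\rangle$ plus the corresponding terms in the quadratic form. This check can be done by induction on $n$, exactly as in the proof of Theorem~\ref{ell}, if the direct computation gets messy.

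The two corollaries then follow immediately from the remarks. For second type foliations $T_\F=0$, so $S_\F=S_{\mc B}$ and all saddle-nodes are non-tangent, which gives $\mathfrak{T}_\F(\mc B)=\sum_{p\in SN(\bar\F)}(\mu_p(\bar\F)-1)$. For generalized curves there are no saddle-nodes, so this sum vanishes as well.
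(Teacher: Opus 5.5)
Your proposal follows the paper's proof essentially step for step. Both use the iterated blow-up (Van den Essen) formula $\mu_0(\F)=\sharp\mathrm{Sing}(\bar\F)+\sum_j(\ell_j^2-\ell_j-1)+\sum_{p\in SN(\bar\F)}(\mu_p(\bar\F)-1)$, count singular points via the balanced condition, split $\sum_{p\in SN(\bar\F)}(\mu_p(\bar\F)-1)=\langle T_\F,u\rangle+\mathfrak T_\F(\mc B)$, substitute Theorem~\ref{ell}, and reduce to the combinatorial identity of \cite[Theorem 2.8]{FFMR}.

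The cancellation you defer rests on the one-line fact the paper uses to replace $S_{\mc B}$ by $S_\F$. Tangent saddle-nodes never lie on dicritical components, so $\langle T_\F,\delta\rangle=0$, i.e. $\langle S_\F,\delta\rangle=\langle S_{\mc B},\delta\rangle$. Hence the surviving $T_\F$-linear terms, $\langle T_\F,u-2\iota\rangle$ from $\|\ell\|^2-\langle\ell,u\rangle$ and $\langle T_\F,u\rangle$ from the saddle-node count, add up to $2\langle T_\F,\delta\rangle=0$. Also, the valence of $E_j$ is $+\sum_{k\neq j}A_{jk}$, not its negative.
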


\begin{proof}
The Van den Essen formula implies that 
\[\mu_0(\F)=\sharp\mr{Sing}(\bar\F)+N(\ell) +C,\] 
where $N(\ell)=\sum_{j=1}^n(\ell_j^2-\ell_j-1)=\langle\ell,\ell\rangle-\langle\ell,u\rangle-\langle u,u\rangle$ and $C=\sum_{p\in {SN}(\bar\F)}(\mu_p(\bar\F)-1)$.
We can write $\B=\mc I+\mc D$ where $\mc I$ are the isolated separatrices  (including the formal ones) of $\F$ and the support of $\mc D$ consists of some dicritical separatrices of $\F$. The number of singularities of $\bar\F$ is 
\[ 
\underbrace{
\langle S_{\mc I},u\rangle
}_{\begin{array}{c}\scriptstyle\text{attaching points of}\\ \scriptstyle\text{isolated separatrices}\end{array}}
+\underbrace{\langle u,u\rangle-1}_{\text{corners}}-\underbrace{\sum_{D\text{ dic}}\mr{val}_D}_{\begin{array}{c}\scriptstyle\text{attaching points of}\\ \scriptstyle\text{dicritical components}\end{array}}\] 
and, using that $\mc B$ is balanced, we have $\sum_{D\text{ dic}}\mr{val}_D=2\langle\delta,u\rangle-\langle S_{\mc B},\delta\rangle$, where $\delta=u-\iota$ is the vector of dicritical components of~$E$.
Since  $S_{\mc B}=S_{\mc I}+S_{\mc D}$ and $\langle S_{\mc I},\delta\rangle=\langle S_{\mc D},\iota\rangle=0$ we deduce that $\langle S_{\mc I},u\rangle=\langle S_{\mc B},u-\delta\rangle$.
Thus we obtain 
\begin{align*}
\mu_0(\F)&=\langle S_{\mc B},u-\delta\rangle+\langle u,u\rangle-1-\Big(2\langle\delta,u\rangle-\langle S_{\mc B},\delta\rangle\Big)+\langle\ell,\ell\rangle-\langle \ell,u\rangle-\langle u,u\rangle+C\\
&=\langle S_{\mc B},u\rangle-2\langle \delta,u\rangle+\langle\ell,\ell\rangle-\langle\ell,u\rangle-1+C.
\end{align*}
Since, according to Theorem~\ref{ell},

\begin{align*}
\langle\ell,\ell\rangle&=\langle (F^{-1})^{\mathsf{T}}S_{\F},(F^{-1})^{\mathsf{T}}S_{\F}\rangle-2\langle (F^{-1})^{\mathsf{T}}S_{\F},F\iota\rangle+\langle F\iota,F\iota\rangle\\
&=\langle -A^{-1}S_{\F},S_{\F}\rangle-2\langle S_{\F},\iota\rangle-\langle \iota,A\iota\rangle
\end{align*}
and
\[\langle \ell,u\rangle=\langle (F^{-1})^{\mathsf{T}}\big(S_{\mc F}-F^{\mathsf{T}}F\iota\big),u\rangle=\langle S_{\F} + A\iota,F^{-1}u\rangle,\]
we deduce that
\begin{align*}
\mu_0(\F)&=\langle S_{\mc B},u\rangle-2\langle \delta,u\rangle+\langle -A^{-1}S_{\F},S_{\F}\rangle-2\langle S_{\F},u-\delta\rangle-\langle \iota, A\iota\rangle-\langle S_{\F}+A\iota,F^{-1}u\rangle-1+C.\\
\end{align*}

Observe that 
\begin{align*}
\langle S_{\mc B},u\rangle + C &= \langle S_{\mc F},u\rangle + C-\langle T_\F,u\rangle \\
&=\langle S_{\mc F},u\rangle+\sum_{p\in SN(\bar\F)\setminus TSN(\bar\F)}(\mu_p(\bar \F)-1)\\
&=\langle S_{\mc F},u\rangle+\mathfrak{T}_\F(\mc B)
\end{align*}
because
\[\langle T_\F,u\rangle=\sum_{i=1}^n\sum_{p\in TSN(\bar\F)\cap E_i}(\mu_p(\bar \F,E_i)-1)=\sum_{p\in TSN(\bar \F)}(\mu_p(\bar\F)-1)\]
due to the fact that $\mu_p(\bar\F,E_i)=1$ (resp. $\mu_p(\bar\F,E_i)=\mu_p(\bar\F)$) if $E_i$ is the strong (resp. weak) separatrix of $p$. Therefore

\begin{align*}
\mu_0(\F)
&=\begin{aligned}[t]
   &\langle -A^{-1}S_{\F},S_{\F}\rangle+\langle S_{\F},u\rangle
   -2\langle \delta,u\rangle-2\langle S_{\F},u-\delta\rangle -\langle \iota, A\iota\rangle-\langle S_{\F}+A\iota,F^{-1}u\rangle\\
   &\quad   -1+\mathfrak{T}_\F(\mc B)
  \end{aligned}\\
&=\langle-A^{-1}S_\F -(I+F^{-1})u,S_\F\rangle
  +2\langle S_{\F},\delta\rangle
  -\langle A\iota,F^{-1}u\rangle
  -2\langle\delta,u\rangle
  -\langle\iota,A\iota\rangle
  -1 +\mathfrak{T}_\F(\mc B)\\
&=\langle-A^{-1}S_\F -(I+F^{-1})u,S_\F\rangle
  -1+\beta +\mathfrak{T}_\F(\mc B),
\end{align*}
where $\beta=2\langle S_{\mc F},\delta\rangle-\langle A\iota,F^{-1}u\rangle-2\langle\delta,u\rangle-\langle\iota,A\iota\rangle$. Notice that
\[\langle S_\F,\delta\rangle=\langle S_{\mc B},\delta\rangle+\langle T_\F,\delta\rangle=\langle S_{\mc B},\delta\rangle\]
because $TSN(\bar \F)\cap E_i=\emptyset $ if $E_i$ is dicritical. Then $\beta=2\langle S_{\mc B},\delta\rangle-\langle A\iota,F^{-1}u\rangle-2\langle\delta,u\rangle-\langle\iota,A\iota\rangle$ and the same proof of \cite[Theorem 2.8]{FFMR} shows that $\beta =2$. This completes the proof.
\end{proof}

\section{Index theorems for holomorphic foliations} 
Throughout this section
we fix a germ of holomorphic foliation $\F : \omega = 0$ on $(\C^2,0)$ and we consider reduced
 $\F$-invariant divisors $C=C_0-C_\infty$ 
 where $C_0$ and $C_\infty$ are reduced and effective. 
 Let $\pi=\pi_n\circ\cdots\circ\pi_1$ be a resolution of $\F$ with intersection matrix $A=-F^{\mathsf{T}}F$, proximity matrix $F$ and invariant vector $\iota$.
We will denote by $\mc B$ a balanced divisor for $\F$ adapted to $\pi$.  For the definition and basic properties of the indices we consider in this section we refer to \cite{Brunella}.

Consider $C=\{f=0\}$ a reduced effective divisor invariant by $\F$, then we can write (\cite{Su})
\begin{equation}\label{descomposicion}
g\omega = hdf + f \eta,
 \end{equation}
with $f$ and $h$  relatively prime and $\eta$ a holomorphic $1$--form. We define  the \textbf{G\'omez-Mont- Seade - Verjovsky index} (GSV index) of $\mathcal{F}$ with respect to $C$ as 
\begin{equation*}
GSV_0(\mathcal{F}, C)= ord_0 \left.\left( \frac{h}{g}\right)\right|_C=\frac{1}{2\pi i}\int_{\partial C}\frac{g}{h}d\left(\frac{h}{g} \right),
\end{equation*} 
where $\partial C$ is the intersection of $C$ with a small sphere around $0$, with the induced orientation. A decomposition (\ref{descomposicion}) also exists for a branch of formal separatrix $C$ with formal equation $f \in \C[[x, y]]$, yielding $g$, $h$ and $\eta$ as formal objects. In this context, we
can extend the definition of the GSV index to $C$ by taking $\gamma(t)$, a Puiseux parametrization for $C$ such that $\gamma(0) = 0$, and setting \[GSV_0(\mathcal{F}, C)= ord_0 \left.\left( \frac{h}{g} \circ \gamma(t)\right)\right..\]
If $C_1$ and $C_2$ are $\F$-invariant curves without common components, then the following formula holds (cf. \cite{Brunella})
\begin{equation}\label{GSV en suma}
GSV_0(\mathcal{F}, C_1+C_2)=GSV_0(\mathcal{F}, C_1)+GSV_0(\mathcal{F}, C_2)- 2 i_0(C_1,C_2).
\end{equation}

\begin{remark}
For the saddle-node foliation given by $\omega = x^k\, dy - y (1+\lambda x^{k-1})\,dx$ with strong (respectively, weak) separatrix $S=\{x=0\}$ (respectively, $W=\{y=0\}$) we can verify that
\begin{align*}
GSV_0(\F,W)&=k=\mu_0(\F),\\
GSV_0(\F,S)&=1.
\end{align*}
\end{remark}

\begin{remark}\label{GSV vs Milnor}
It can be seen that, for $C$ an invariant (formal) curve, $\mu_0(\F,C)=GSV_0(\mathcal{F}, C) + \mu_0(C)$. In fact, it follows easily from the definition if $C$ is irreducible, and from the behavior of the indices with respect to the sum of curves in the general case.
\end{remark}

Now, we define the \textbf{Camacho-Sad index} of $\F$ along $C$ as
$$
CS_0(\F,C) = -\frac{1}{2\pi i}\int_{\partial C}\frac{1}{h}\eta = - Res_{t=0} \gamma^*\left(\frac{1}{h}\eta\right).
$$
In contrast to the GSV-index, see \eqref{GSV en suma}, if $C_1$ and $C_2$ are $\mathcal{F}$-invariant curves without common components, the following holds (cf. \cite{Brunella}):
\begin{equation}\label{CS en suma}
CS_0(\F,C_1+C_2)=CS_0(\F,C_1)+CS_0(\F,C_2)+2i_0(C_1,C_2).
\end{equation}
We extend the definition for divisors with polar part as follows. Let $C=C_0-C_\infty$ be an invariant reduced divisor with $C_0$ and $C_\infty$ effective. We define \[CS_0(\F,C)=CS_0(\F,C_0)+CS_0(\F,C_\infty)-2i_0(C_0,C_\infty).\]
On a pointed neighborhood 
of $0$ we may find a complex valued smooth $1$-form $\beta$, of type $(1,0)$, such that
\begin{equation}\label{descomposicion beta}
d\omega = \beta \wedge \omega.
\end{equation}
The \textbf{Variation index} of $\F$ along $C$ 
is defined as (\cite{K-S})
$$
Var_0(\F,C) = \frac{1}{2\pi i}\int_{\partial C} \beta.
$$
For any invariant curve we have the relation (cf. \cite[Proposition 5]{Brunella})
$$
Var_0(\F,C)= CS_0(\F,C)+ GSV_0(\F,C).
$$
and we use this relation to define the variation index for a formal separatrix. This index is additive in the separatrices of $\F$
$$
Var_0(\F,C_1+C_2)= Var(\F,C_1)+ Var_0(\F,C_2). 
$$
So we extend $Var_0$ by linearity for arbitrary invariant reduced divisors. Finally, using the writing \eqref{descomposicion beta}, the \textbf{Baum-Bott} index of $\F$ at $0$ is 
$$
BB_0(\F)=\frac{1}{(2 \pi i)^2}\int_{S^3} \beta \wedge d\beta
$$
where $S^3$ in a small sphere around $0$, oriented as a boundary of a small ball containing $0$. For a non-degenerate reduced singularity with eigenvalues $\lambda_1$ and $\lambda_2$ we have
$$
BB_0(\F)= \frac{\lambda_1}{\lambda_2}+\frac{\lambda_2}{\lambda_1}+2.
$$

\begin{theorem}\label{Indices}
Let $\F$ be a germ of foliation on $(\C^2,0)$ and $\B$ a balanced divisor. Then:
\begin{enumerate}
\item If $C$ is an invariant divisor, then
\begin{align*}
\mu_0(\F,C)
  &= \langle -A^{-1}S_{\F}-(I+F^{-1})u,S_C\rangle 
     + 1 + \mathfrak{T}_{\F}(C).
\end{align*}

\item If, in addition, $C$ is reduced, then
\begin{align*}
CS_0(\F,C)
  &= \sum_{p\in \bar C\cap E} 
     \bigl(CS_p(\bar\F,\bar C_0)+CS_p(\bar\F,\bar C_\infty)\bigr)
     + \langle -A^{-1}S_C,S_C\rangle,\\[4pt]
Var_0(\F,C)
  &= \sum_{p\in E\cap\bar C_0} Var_p(\bar\F,\bar C_0)
     - \sum_{p\in E\cap\bar C_\infty} Var_p(\bar\F,\bar C_\infty)
     + \langle -A^{-1}S_{\F}-\iota,S_C\rangle.
\end{align*}

\item If, moreover, $C$ is reduced and effective, then
\begin{align*}
GSV_0(\F,C)
  &= \langle -A^{-1}(S_{\F}-S_C),S_C\rangle
     + \mathfrak{T}_{\F}(C).
\end{align*}

\item The Baum--Bott index is given by
\begin{align*}
BB_0(\F)
  &= \sum_{p\in\mathrm{Sing}(\bar\F)} BB_p(\bar\F)
     + \langle -A^{-1}S_{\F},S_{\F}\rangle
     - 2\langle S_{\F},\iota\rangle
     - \langle A\iota,\iota\rangle.
\end{align*}
\end{enumerate}
\end{theorem}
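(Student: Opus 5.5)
The strategy is to follow the generalized curve case of \cite{FFMR}. Each index is compared with the corresponding index of $\bar\F=\pi^*\F$ along the strict transforms, one blow-up at a time. The correction terms coming from one blow-up involve the algebraic multiplicity $\nu_p$ of the curve and the discrepancy $\ell_j$. Summing over the whole sequence $\pi$, these corrections assemble into expressions in $\nu(C)=(F^{-1})^\mathsf{T}S_C$ and $\ell$. We then substitute Theorem~\ref{ell}, $\ell=(F^{-1})^\mathsf{T}S_\F-F\iota$, and rewrite everything with $A=-F^\mathsf{T}F$, so that $\langle\nu(C),\nu(D)\rangle=\langle -A^{-1}S_C,S_D\rangle$ and $\langle\nu(C),F\iota\rangle=\langle S_C,\iota\rangle$. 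The only genuinely new feature compared with \cite{FFMR} is that $S_{\mc B}$ is replaced by $S_\F$, together with the transverse excess term $\mathfrak T_\F$.

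\textbf{Items (2) and (4).} For a single blow-up at a point of multiplicity $\nu$, the classical formulas (cf. \cite{Brunella}) are:
\begin{itemize}
\item $CS$ drops by $\nu^2$, with $CS$ at the end computed on the reduced model. This needs no foliation data, and the sum over all blow-ups gives $\langle\nu(C),\nu(C)\rangle=\langle -A^{-1}S_C,S_C\rangle$. For $C=C_0-C_\infty$ we apply this to $C_0$ and $C_\infty$ separately, then use the definition and Noether's formula $i_0=\langle\nu_{C_0},\nu_{C_\infty}\rangle$.
\item $Var$ drops by $\nu(\ell-\iota_j)$-type terms. Concretely, $Var_0(\F,C)=\sum Var_p(\bar\F,\bar C)+\langle\nu(C),\ell\rangle$, up to the $\iota$ correction. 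This gives $\langle S_C,F^{-1}\ell\rangle$ plus corrections, which becomes $\langle -A^{-1}S_\F-\iota,S_C\rangle$ after inserting Theorem~\ref{ell}. Linearity of $Var$ handles $C_\infty$ with a minus sign.
\item $BB$ changes under one blow-up by $\ell_j^2$, suitably corrected when $E_j$ is dicritical. Summing gives $\langle\ell,\ell\rangle$, and the expansion already carried out in the proof of Theorem~\ref{muF} yields $\langle -A^{-1}S_\F,S_\F\rangle-2\langle S_\F,\iota\rangle-\langle A\iota,\iota\rangle$.
\end{itemize}
No saddle-node analysis is needed in (2) or (4) beyond Theorem~\ref{ell}, because all the local indices are kept at the singular points of $\bar\F$.

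\textbf{Item (1).} It suffices to treat an irreducible (possibly formal) branch $C$, since both sides are affine in $C$ with the same normalization ($\mu_0(\F,\cdot)-1$ is linear). For a branch, $\mu_0(\F,C)=\mu_p(\bar\F,\bar C)+\sum_j\nu_{p_j}(\bar C_j)(\ell_j-1)$, with $\iota_j$ adjustments. This follows because pulling back $\omega$ divides by $E_j^{\ell_j}$, and the parametrization of $\bar C$ meets $E_j$ with order $\nu$. Summing and inserting Theorem~\ref{ell} gives
\[\langle\nu(C),\ell\rangle-\langle\nu(C),u\rangle=\langle -A^{-1}S_\F-(I+F^{-1})u,S_C\rangle+\langle S_C,\iota\rangle-\dots,\]
and the $\iota$ terms are reconciled as in \cite{FFMR}. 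The remaining term is $\mu_p(\bar\F,\bar C)-1$ at the end point $p$. It is $0$ for a nondegenerate point or a strong separatrix, and equals $\mu_p(\bar\F)-1$ exactly when $\bar C$ is the weak separatrix of a non-tangent saddle-node. This is precisely $\mathfrak T_\F(C)$.

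\textbf{Item (3).} This follows from (1) via Remark~\ref{GSV vs Milnor}, $GSV_0=\mu_0(\F,C)-\mu_0(C)$. We use the curve formula $\mu_0(C)=\langle -A^{-1}S_C-(I+F^{-1})u,S_C\rangle+1$, which is the generalized curve case applied to $df$, valid since $\pi$ resolves $C$. The two expressions differ in exactly the stated way.

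\textbf{Main obstacle.} The delicate step is the local blow-up formulas in the presence of dicritical components and formal weak separatrices. There one must check that the saddle-node contributions enter only through $T_\F$ (absorbed into $S_\F$ via Theorem~\ref{ell}) and $\mathfrak T_\F$. Whenever the end point of $\bar C$ is a tangent saddle-node, I would verify directly that $\mu_p(\bar\F,\bar C)=1$, since there $\bar C$ is the strong separatrix.
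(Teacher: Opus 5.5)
Your proposal is correct. It uses the same ingredients as the paper: one-step blow-up formulas summed into $\nu(C)=(F^{-1})^{\mathsf T}S_C$ and $\ell$, Theorem~\ref{ell}, $A=-F^{\mathsf T}F$, Remark~\ref{GSV vs Milnor}, and Theorem~\ref{muF} applied to the curve foliation $df$. The difference is that you run items (1) and (3) in the opposite order.

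The paper proves (3) first. For a branch it uses Brunella's formula $GSV_0(\F,C)=GSV_p(\bar\F,\bar C)+\nu_0(C)(\ell_1-\nu_0(C))$, extends to reduced effective $C$ by the additivity rule for $GSV$, and then deduces (1) by adding $\mu_0(C)$ from Theorem~\ref{muF}.

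You instead derive (1) directly from the one-step formula $\mu_0(\F,C)=\mu_p(\bar\F,\bar C)+\nu_0(C)(\ell_1-1)$, and recover (3) by subtracting $\mu_0(C)$. That one-step formula follows from the definition of $\theta$ alone: in a chart with $E_1=\{x=0\}$, the lift of a vector field defining $\F$ is $x^{\ell_1-1}$ times a saturated one, and $x\circ\bar\gamma$ has order $\nu_0(C)$.

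So your route needs no external blow-up formula for the multiplicity/$GSV$ step, while the paper's route keeps (3) independent of Theorem~\ref{muF}. Similarly, your direct variation formula is equivalent to the paper's use of $Var=CS+GSV$ together with the sum of the local $GSV$ values.

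When you write it up, remove the hedges. No one-step formula needs an $\iota_j$ correction, because the dicritical case is already built into $\ell_j=\nu+1-\iota_j$. The variation drops by exactly $\nu\ell_j$ (not $\nu(\ell_j-\iota_j)$), and $BB$ by exactly $\ell_j^2$. Hence $\langle\nu(C),\ell\rangle=\langle -A^{-1}S_\F-\iota,S_C\rangle$ exactly, with no further terms.

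Also fix the sign in your display for (1). For a branch with $S_C=\varepsilon_k$ one has $\langle\nu(C),\ell-u\rangle=\langle -A^{-1}S_\F-(I+F^{-1})u,S_C\rangle+1-\iota_k$. The residual to identify with $\mathfrak T_\F(C)$ is therefore $\mu_p(\bar\F,\bar C)-\iota_k$. On invariant components this is your $\mu_p(\bar\F,\bar C)-1$. When $\bar C$ crosses a dicritical component at a regular point it is $0-0$, a case your analysis omitted.
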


\begin{proof}
Let us begin proving (3) in the case that $C$ is irreducible.
By \cite{Brunella}, the relation between  the GSV index of a foliation $\F$ and its pull-back $\bar\F$ by a single blow-up is 
\[GSV_0(\F,C)=GSV_p(\bar\F,\bar C)+\nu_0(C)(\ell_1-\nu_0(C)),\]
where $\bar C$ is the strict transform of $C$ and $p$ is the intersection point of $\bar C$ with the exceptional divisor. Recursively we obtain
\[GSV_0(\F,C)=GSV_{p}(\bar\F,\bar C)+\sum_{i=1}^n\nu_{p_{i-1}}(\bar C_{i-1})(\ell_i-\nu_{p_{i-1}}(\bar C_{i-1})=GSV_{p}(\bar\F,\bar C)+\langle \nu_C,\ell-\nu_C\rangle,\]
where $\{p\}=\bar C\cap E=\bar C\cap E_k$.
Using Lemma~\ref{multiplicidades de curva} and Theorem~\ref{ell} we have
\begin{align*}
\langle\nu_C,\ell-\nu_C\rangle
&=\langle (F^{-1})^{\mathsf{T}}S_C,\ell-(F^{-1})^{\mathsf{T}}S_C\rangle\\
&=\langle (F^{-1})^{\mathsf{T}}S_C,(F^{-1})^{\mathsf{T}}S_{\F}-F\iota-(F^{-1})^{\mathsf{T}}S_C\rangle\\
&=\langle S_C,F^{-1}(F^{-1})^{\mathsf{T}}S_{\F}-\iota-F^{-1}(F^{-1})^{\mathsf{T}}S_C\rangle\\
&=\langle S_C,-A^{-1}(S_{\F}-S_C)-\iota\rangle\\
&=\langle S_C,-A^{-1}(S_{\F}-S_C)\rangle-\underbrace{\langle S_C,\iota\rangle}_{\iota_k}.
\end{align*}
On the other hand, 
\begin{equation}
\label{GSVp}
GSV_{p}(\bar \F,\bar C)=\left\{\begin{array}{ll} 0 & \text{ if $p$ is non-singular,}\\
\mu_{p}(\bar\F) &  \text{ if $p$ is a non-tangent saddle-node,}\\
1 &\text{ otherwise.}
\end{array}\right.
\end{equation}
Hence,
\[GSV_0(\F,C)=\langle S_C,-A^{-1}(S_{\F}-S_C)\rangle+\iota_k(GSV_{p}(\bar\F, \bar C)-1)=\langle S_C,-A^{-1}(S_{\F}-S_C)\rangle+\mathfrak T_\F(C).\]
The general case follows by using that
\[GSV_0(\F,C_1+C_2)=GSV_0(\F,C_1)+GSV_0(\F,C_2)-2i_0(C_1,C_2),\]
$S_{C_1+C_2}=S_{C_1}+S_{C_2}$, $i_0(C_1,C_2)=\langle S_{C_1},-A^{-1}S_{C_2}\rangle$ and $\mathfrak T_\F(C_1+C_2)=\mathfrak T_\F(C_1)+\mathfrak T_\F(C_2)$.

We prove now assertion (1) in case that $C$ is effective.  The foliation $\F_C$ defined by a reduced equation of $C$ has $S_{\F_C}=S_C$.
Using Theorem~\ref{muF} for the foliation $\F_C$  after the  first equality (see Remark~\ref{GSV vs Milnor})  we  obtain
\begin{align*}
\mu_0(\F,C)&=GSV_0(\mathcal{F}, C) + \mu_0(C) \\
&= \langle -A^{-1}(S_{\F}-S_C),S_C\rangle +\mathfrak T_\F(C)+\langle-A^{-1}S_C -(I+F^{-1})u   ,S_C\rangle
+1\\
&=\langle -A^{-1}S_\F-(I+F^{-1})u,S_C\rangle+1+\mathfrak T_\F(C).
\end{align*}
Finally, we use the fact that both $\mu_0(\F, C) - 1$, $\langle -A^{-1}S_{\mathcal{B}} - (I + F^{-1})u, S_C \rangle$ and $\mathfrak T_\F$ are linear in $C$ to state  the result for any reduced divisor with polar part.

The proof of assertion (2) for the Camacho-Sad index that we give in \cite{FFMR} for generalized curves is valid for arbitrary foliations because in the relation $CS_0(\F,C)=CS_{p_1}(\bar\F_1,\bar C_1)+\nu_0(C)^2$ does not appears $\ell_1$. For the variation index we consider first $C$ effective and use the relation (cf. \cite[Proposition~5]{Brunella})
\begin{equation}
\label{VarCS}
Var_0(\F,C)=CS_0(\F,C)+GSV_0(\F,C)
\end{equation}
and the following easy consequence of formula (\ref{GSVp})
\begin{equation}\label{sumGSV}
\sum_{p\in E\cap \bar C}GSV_p(\bar\F,\bar C\rangle=\langle\iota,S_C\rangle+\mathfrak T_\F(C).
\end{equation}

For a divisor $C$ with polar part we use the linearity in the definition of the variation. Finally, the assertion (4) concerning the Baum-Bott index follows from the relation (see \cite[Proposition~1]{Brunella})
\[BB_0(\F)=\sum_{p\in\mr{Sing}(\bar\F_1)}BB_p(\bar\F_1)+\ell_1^2.\]
Recursively we obtain that
\begin{align*}
BB_0(\F)&=\sum_{p\in\mr{Sing}(\bar\F)}BB_p(\bar\F)+\langle\ell,\ell\rangle\\
&=\sum_{p\in\mr{Sing}(\bar\F)}BB_p(\bar\F)+\langle -A^{-1}S_{\F},S_{\F}\rangle -2\langle S_{\F},\iota\rangle-\langle A\iota,\iota\rangle,
\end{align*}
where in the second equality we use Theorem~\ref{ell}.
\end{proof}

In the case of foliations of second type, we obtain the following formulas, except for the Camacho–Sad index, which remains unchanged.
\begin{corollary}\label{Arturo}
With the same notation of the previous theorem, if $\F$ is a second type foliation, we have
\setcounter{equation}{0}
\begin{align}
\mu_0(\F,C)
 &= \langle -A^{-1}S_{\mc B}-(I+F^{-1})u,S_C\rangle +1
    +\sum_{p\in {SN}(\bar\F)\cap\bar C}(\mu_p(\bar\F)-1),\\
Var_0(\F,C)
 &= \sum_{p\in E\cap\bar C_0} Var_p(\bar\F,\bar C_0)
    - \sum_{p\in E\cap\bar C_\infty} Var_p(\bar\F,\bar C_\infty)
    + \langle -A^{-1}S_{\B}-\iota,S_C\rangle,\\
GSV_0(\F,C)
 &= \langle -A^{-1}(S_{\mc B}-S_C),S_C\rangle
    +\sum_{p\in {SN}(\bar\F)\cap\bar C}(\mu_p(\bar\F)-1),\\
BB_0(\F)
 &= \sum_{p\in\mathrm{Sing}(\bar\F)} BB_p(\bar\F)
    + \langle -A^{-1}S_{\B},S_{\B}\rangle
    - 2\langle S_{\B},\iota\rangle
    - \langle A\iota,\iota\rangle .
\end{align}
\end{corollary}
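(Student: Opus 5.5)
The corollary is obtained by specializing Theorem~\ref{Indices} to second type foliations. The only inputs are the two simplifications that the second type hypothesis gives: $S_\F=S_{\mc B}$ and a simplified transverse excess $\mathfrak T_\F(C)$. I will establish these first and then substitute into each item of the theorem.

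\emph{First simplification.} By the remark following the definition of $T_\F$, $\F$ is of second type if and only if $T_\F=0$. Hence $S_\F=S_{\mc B}+T_\F=S_{\mc B}$. Substituting this into items (2) (variation), (4) (Baum--Bott) and the term $\langle -A^{-1}S_\F-(I+F^{-1})u,S_C\rangle$ of item (1) gives formulas (2) and (4) of the corollary immediately. The Camacho--Sad formula contains neither $S_\F$ nor $\mathfrak T_\F$, so it is unchanged, as the text preceding the corollary notes.

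\emph{Second simplification.} Since $TSN(\bar\F)=\emptyset$ for a second type foliation, $SN(\bar\F)\setminus TSN(\bar\F)=SN(\bar\F)$. Therefore, for an irreducible invariant curve $C$,
\[\mathfrak T_\F(C)=\sum_{p\in\bar C\cap SN(\bar\F)}(\mu_p(\bar\F)-1).\]
By linearity this extends to reduced effective divisors, where the sum runs over $SN(\bar\F)\cap\bar C$. Inserting this, together with $S_\F=S_{\mc B}$, into items (1) and (3) of Theorem~\ref{Indices} yields formulas (1) and (3) of the corollary.

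The one point that needs care is the meaning of the saddle-node sum when $C=C_0-C_\infty$ has polar part, which arises in item (1). There $\mathfrak T_\F(C)=\mathfrak T_\F(C_0)-\mathfrak T_\F(C_\infty)$, so the sum over $SN(\bar\F)\cap\bar C$ must be read with the signs of the corresponding coefficients of $C$. With that convention the identification goes through verbatim. Apart from this bookkeeping, every step is a direct substitution into Theorem~\ref{Indices}.
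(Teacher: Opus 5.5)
Your proposal is correct and matches the paper, which states this corollary as a direct specialization of Theorem~\ref{Indices} without a separate proof: for a second type foliation, $T_\F=0$ gives $S_\F=S_{\mc B}$, and $TSN(\bar\F)=\emptyset$ turns $\mathfrak T_\F(C)$ into the sum over all saddle-nodes on $\bar C$. Your remark that, when $C$ has polar part, the saddle-node sum must be read as $\mathfrak T_\F(C_0)-\mathfrak T_\F(C_\infty)$ is the right reading of the paper's notation.
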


By combining Theorems \ref{Indices} and \ref{muF}, we obtain
\begin{corollary}{\cite[Proposition 4.7]{FP-GB-SM2021}}\label{mil = intmil}
If $\F$ is an arbitrary foliation on $(\C^2,0)$ with balanced divisor $\B$ and weighted balanced divisor $\mc B'$ then 
 \[\mu_0(\F)-\mu_0(\F,\mc B)=\langle -A^{-1}S_\F-(I+F^{-1})u,T_\F\rangle,\] 
 \[\mu_0(\F)-\mu_0(\F,\mc B')=\langle -A^{-1}S_\F-(I+F^{-1})u,C_\F\rangle,\]
 In particular
 \begin{enumerate}
 \item If $\F$ is of second type then $\mu_0(\F)=\mu_0(\F,\mc B)$.
 \item If $\F$ is CND then $\mu_0(\F)=\mu_0(\F,\mc B')$.
 \end{enumerate}
\end{corollary}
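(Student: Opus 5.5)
We subtract the formula of Theorem~\ref{Indices}(1), applied with $C=\mc B$ and with $C=\mc B'$, from the formula of Theorem~\ref{muF}. Set $w:=-A^{-1}S_\F-(I+F^{-1})u$. Theorem~\ref{muF} reads
\[\mu_0(\F)=\langle w,S_\F\rangle+1+\mathfrak T_\F(\mc B),\]
and Theorem~\ref{Indices}(1) for an invariant divisor $C$ gives
\[\mu_0(\F,C)=\langle w,S_C\rangle+1+\mathfrak T_\F(C).\]
Both balanced divisors $\mc B$ and $\mc B'$ are invariant divisors (with possibly polar dicritical part), and Theorem~\ref{Indices}(1) is stated for arbitrary invariant divisors, extended by linearity. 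So it applies to both, once $\mu_0(\F,\cdot)$ is understood via the linear extension \eqref{eq:gmul}. For $\mc B'$, which has coefficients $\mu_p(\bar\F,E_{i_p})\ge1$, we use this same linear extension of $C\mapsto\mu_0(\F,C)-1$, together with the linearity of $S_C$ and $\mathfrak T_\F$.

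For $\mc B$ the subtraction is immediate. Since $S_\F=S_{\mc B}+T_\F$, the terms $1+\mathfrak T_\F(\mc B)$ cancel, and we get
\[\mu_0(\F)-\mu_0(\F,\mc B)=\langle w,S_\F-S_{\mc B}\rangle=\langle w,T_\F\rangle.\]

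For $\mc B'$ we use $S_\F=S_{\mc B'}+C_\F$, which gives
\[\mu_0(\F)-\mu_0(\F,\mc B')=\langle w,C_\F\rangle+\mathfrak T_\F(\mc B)-\mathfrak T_\F(\mc B').\]
Here
\[\mathfrak T_\F(\mc B')-\mathfrak T_\F(\mc B)=\sum_{q\in TSN\setminus CSN}(\mu_q(\bar\F,E_{i_q})-1)\,\mathfrak T_\F(B_q).\]
For a non-corner tangent saddle-node $q$, the strict transform $\bar B_q$ passes through $q$, and $\bar B_q$ is the strong separatrix of $q$. The point $q$ lies in $TSN$, so it is excluded from the sum defining $\mathfrak T_\F(B_q)$, and $\bar B_q$ meets $E$ at no other point. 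Hence $\mathfrak T_\F(B_q)=0$, the correction vanishes, and the second formula follows.

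The particular cases then follow from the remark on $T_\F$ and $C_\F$: second type means $T_\F=0$, and CND means $C_\F=0$.

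The main point requiring care is the legitimacy of applying Theorem~\ref{Indices}(1) to $\mc B'$ with coefficients larger than $1$. That theorem was proved for reduced divisors, with the extension resting on the linearity of $\mu_0(\F,\cdot)-1$. I would state explicitly that all three ingredients are $\Z$-linear in $C$: $\mu_0(\F,C)-1$, $S_C$, and $\mathfrak T_\F(C)$. The identity, once verified on each irreducible separatrix, therefore holds for every integral divisor of separatrices, including $\mc B'$. The vanishing $\mathfrak T_\F(B_q)=0$ is the only other nonformal input.
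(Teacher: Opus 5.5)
Your proof is correct and follows the paper's own argument. You subtract Theorem~\ref{Indices}(1) from Theorem~\ref{muF}, use $S_\F=S_{\mc B}+T_\F=S_{\mc B'}+C_\F$, and note that $\mathfrak T_\F(\mc B')=\mathfrak T_\F(\mc B)$. Beyond the paper, you also justify that last equality (for $q\in TSN(\bar\F)\setminus CSN(\bar\F)$ the strict transform $\bar B_q$ is the strong separatrix and meets $E$ only at $q$), and you justify the non-reduced coefficients of $\mc B'$ via linearity of $\mu_0(\F,\cdot)-1$. The paper leaves both of these points implicit.
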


\begin{proof} From Theorems~\ref{muF} and~\ref{Indices} we obtain
\[\mu_0(\F)-\mu_0(\F,C)=
\langle -A^{-1}S_\F-(I+F^{-1})u,S_\F-S_C\rangle+\mathfrak{T}_{\F}(\mc I)-\mathfrak{T}_{\F}(C).\]
If we take $C=\mc B$ then $S_\F-S_C=T_\F$ and $\mathfrak{T}_{\F}(C)=\mathfrak{T}_{\F}(\mc I)$. If we take $C=\mc B'$ then $S_\F-S_C=C_\F$ and $\mathfrak{T}_{\F}(C)=\mathfrak{T}_{\F}(\mc I)$.
\end{proof}

Notice that the converses of assertions $(1)$ and $(2)$ in Corollary~\ref{mil = intmil} are not true as the following examples show:
 \begin{example}
The multiplicity one foliation $\F$ defined by $ydx+(y-x)dy$ is not of second type. In fact, it reduces after one blow-up having a single singular point which is a tangent saddle-node with Milnor number $2$. Thus, in this case $-A^{-1}=F=S_{\mc B}=T_\F=1$ and $S_\F=2$.  
According to 
Corollary~\ref{mil = intmil} we have  
\[
\mu_0(\F)-\mu_0(\F,\mc B)=\langle -A^{-1}S_\F-(I+F^{-1})u,T_\F\rangle=0.\]
\end{example}

\begin{example}
The foliation considered in Example~\ref{Dulac}, defined by $\omega=ydx-(2x+y^2)dy$ can be reduced with two blow-ups and have one non-degenerate singularity in the first component $E_1$ of the exceptional divisor  and one corner saddle-node  at $E_1\cap E_2$ so that it is not a CND foliation. Moreover,
$\mc B=\mc B'=\{y=0\}$, $S_{\mc B}=(1,0)^\mathsf{T}$, $T_\F=C_\F=(0,1)^\mathsf{T}$, 
$S_\F=u=(1,1)^\mathsf{T}$, $A=\left(\begin{array}{cc}-2 & 1\\ 1 & -1\end{array}\right)$, 
$F=\left(\begin{array}{cc}1 & 0\\ -1 & 1\end{array}\right)$ 
and consequently $-A^{-1}S_\F-(I+F^{-1})u=0$.
\end{example}

\section{Some applications}

In \cite[\S3.5]{FM} the {\bf polar excess index} of a foliation $\F$ on $(\C^2,0)$ with respect to an effective and reduced invariant divisor $C$ contained in a balanced divisor $\mc B$ of $\F$  is defined by 
\[\Delta_0(\F,C)=i_0(\mc P^\F,C)-i_0(\mc P^{\mc B},C),\]
where $\mc P^\F$ (resp. $\mc P^{\mc B}$) is a generic polar curve of $\F$ (resp. the foliation defined by a reduced equation of $\mc B$).

\begin{corollary}\label{Delta} Let $C$ be a $\F$-invariant reduced effective divisor. Then the polar excess index $\Delta_0(\F,C)$ of $\F$ with respect to $C$ can be computed as
\[\Delta_0(\F,C)=\mathfrak{T}_\F(C)+\langle -A^{-1}T_\F,S_C\rangle.\]
\end{corollary}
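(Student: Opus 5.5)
Express both intersection numbers in the definition of $\Delta_0(\F,C)$ through multiplicities along $C$, and then take the difference of the formulas already available in Theorem~\ref{Indices}. The link is the classical polar relation: for a generic polar curve $\mc P^\F$ of $\F$ and an invariant irreducible curve $C$ one has $i_0(\mc P^\F,C)=\mu_0(\F,C)+\nu_0(C)-1$. The reason is that the polar $\{aP+bQ=0\}$ restricted to a parametrization $\gamma$ of $C$ has order $\ord_t\theta(t)+\ord_t\gamma'(t)$ for generic $(a,b)$, and $\ord_t\gamma'=\nu_0(C)-1$. By additivity of $i_0$ in $C$, together with the linear extension of $\mu_0(\cdot,C)-1$, this gives for a reduced effective divisor $C$
\[
i_0(\mc P^\F,C)=\mu_0(\F,C)-1+\nu_0(C).
\]
This holds for the foliation $\F$, and equally for the foliation $\F_{\mc B}$ defined by a reduced equation of $\mc B$, for which $C\subset\mc B$ is also invariant. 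The $\nu_0(C)$ terms therefore cancel, and
\[
\Delta_0(\F,C)=\mu_0(\F,C)-\mu_0(\F_{\mc B},C).
\]
If $\mc B$ has a polar part, one should instead take $\mc P^{\mc B}$ as a generic polar of the meromorphic function defining $\mc B$. The same order computation still applies, since only the $1$-form restricted to $\gamma$ matters.

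Next, apply Theorem~\ref{Indices}(1) to each of the two foliations, using the same reduction $\pi$. This is legitimate because a reduction of singularities of $\F$ also reduces $\F_{\mc B}$ (possibly non-minimally): $\pi$ desingularizes the curve $\mc B$, and the dicritical components of $\F$ correspond to pencil components of $\mc B$. For $\F$ we obtain $\langle -A^{-1}S_\F-(I+F^{-1})u,S_C\rangle+1+\mathfrak T_\F(C)$. For $\F_{\mc B}$, which is a generalized curve, we have $S_{\F_{\mc B}}=S_{\mc B}$ and $\mathfrak T=0$, so we obtain $\langle -A^{-1}S_{\mc B}-(I+F^{-1})u,S_C\rangle+1$. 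Subtracting and using $S_\F-S_{\mc B}=T_\F$ gives
\[
\Delta_0(\F,C)=\langle -A^{-1}T_\F,S_C\rangle+\mathfrak T_\F(C),
\]
since $A$ is symmetric and the placement of $-A^{-1}$ in the pairing does not matter.

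The main obstacle is the justification of the polar relation, especially for formal branches and for the divisor $\mc B$ when it has negative coefficients. I would first prove it branch by branch via Puiseux parametrizations, computing $\ord_t\big((aP+bQ)\circ\gamma\big)$. The minimum over $(a,b)$ equals $\min\big(\ord_t P(\gamma),\ord_t Q(\gamma)\big)$, and this is $\ord_t\theta+\min\big(\ord_t x',\ord_t y'\big)$ by the formulas for $\theta$. If needed, I would fall back on the definition in \cite{FM}, where $\Delta_0$ is presumably already related to multiplicities along $C$.
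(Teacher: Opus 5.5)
Your polar relation is correct: invariance gives $aP(\gamma)+bQ(\gamma)=\theta\,(ay'-bx')$, so $i_0(\mc P^\F,C)=\mu_0(\F,C)+\nu_0(C)-1$, additively over branches. Your route differs from the paper's. The paper quotes \cite[Theorem 3.3]{FM}, $GSV_0(\F,C)=\Delta_0(\F,C)+\langle -A^{-1}(S_{\mc B}-S_C),S_C\rangle$, and subtracts Theorem~\ref{Indices}(3). Your argument would be self-contained and would in effect reprove that result of \cite{FM}.

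It goes through when $\mc B$ is effective. Then $\pi^{-1}(\mc B)$ has normal crossings, so $\F_{\mc B}: df=0$ is a generalized curve reduced by $\pi$ whose isolated separatrices are exactly the components of $\mc B$. Hence $S_{\F_{\mc B}}=S_{\mc B}$. (Every $E_j$, including the dicritical components of $\F$, is $\F_{\mc B}$-invariant because $\pi^*f$ vanishes on it. This is harmless, since Theorem~\ref{Indices}(1) does not involve $\iota$.)

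\textbf{The gap.} The problem is the case $\mc B=\mc B_0-\mc B_\infty$ with $\mc B_\infty\neq 0$. This case is forced as soon as some dicritical component has valence $\ge 3$. Let $f_0,f_\infty$ be reduced equations of $\mc B_0,\mc B_\infty$ and put $\phi=f_0/f_\infty$. Take $p=\bar B\cap D$ with $B\subseteq\mc B_\infty$, in coordinates with $D=\{x=0\}$ and $\bar B=\{y=0\}$. Then $\pi^*\phi=x^a y^{-1}\cdot(\text{unit})$, where $a=(-A^{-1}S_{\mc B})_D\ge 1$ by Lemma~\ref{SBT}. All level curves of $\phi$ pass through $p$, so $p$ is a dicritical, non-reduced singularity of $\pi^*\F_{\mc B}$.

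Consequently $\pi$ does \emph{not} reduce $\F_{\mc B}$. Completing the reduction creates, for each such $B$, a dicritical component of valence $1$ crossed only by $\bar B$. There $\mc B$ contributes $-1$ instead of $2-1=1$, so $\mc B$ is not balanced for $\F_{\mc B}$. Theorem~\ref{Indices}(1) therefore cannot be applied to $\F_{\mc B}$ with $\pi$ and $S_{\F_{\mc B}}=S_{\mc B}$.

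There is a second error of the same size. With the meromorphic polar, your computation gives $i_0(\mc P^{\mc B},C)=\ord_t\theta_{d\phi}+\nu_0(C)-1$, where $\theta_{d\phi}$ is built from the meromorphic form $d\phi$. But $\ord_t\theta_{d\phi}$ is not the multiplicity of the foliation $\F_{\mc B}$ along $C$. Since $d\phi=(f_\infty df_0-f_0df_\infty)/f_\infty^2$, it is $2i_0(\mc B_\infty,C)$ smaller than the order computed with the holomorphic form $f_\infty df_0-f_0df_\infty$. The two errors cancel, which is why your final formula is nonetheless right.

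\textbf{The repair.} Bypass $\F_{\mc B}$ entirely. For a branch $C=\{g=0\}$ of $\mc B_0$, write $\phi=g\,h/f_\infty$. Along $\gamma$ one has $\phi_x=(h/f_\infty)\,g_x$ and $\phi_y=(h/f_\infty)\,g_y$. Your computation applied to $dg$, with $\mu_0(dg,C)=\mu_0(C)$, then gives
\[
i_0(\mc P^{\mc B},C)=\mu_0(C)+\nu_0(C)-1+i_0(\mc B_0-C,C)-i_0(\mc B_\infty,C).
\]
Now use Theorem~\ref{muF} for $\F_C$, i.e.\ $\mu_0(C)=\langle -A^{-1}S_C-(I+F^{-1})u,S_C\rangle+1$, together with $i_0(B,C)=\langle -A^{-1}S_B,S_C\rangle$. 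The right-hand side becomes $\langle -A^{-1}S_{\mc B}-(I+F^{-1})u,S_C\rangle+\nu_0(C)$, which is linear in $C$. Subtracting this from $i_0(\mc P^\F,C)$, evaluated by Theorem~\ref{Indices}(1) for $\F$, gives the statement for every balanced $\mc B$.
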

\begin{proof}
From \cite[Theorem 3.3]{FM} we obtain that
\[GSV_0(\F,C)=\Delta_0(\F,C)+\langle -A^{-1}(S_{\mc B}-S_C),S_C\rangle\]
and we compare with Theorem~\ref{Indices}.
\end{proof}

Since $\Delta_0(\F,C_1+C_2)=\Delta_0(\F,C_1)+\Delta_0(\F,C_2)$ we extend by linearity $\Delta_0$ for arbitrary $\F$-invariant reduced divisors.

\begin{remark}\label{SVK}
Using Seifert-Van Kampen theorem, if $\pi$ is a desingularization
of a reduced germ of curve $C:f_1\cdots f_m=0$ then 
the fundamental group of the complement of $C$ in a Milnor ball $\mb B$ is
generated by loops $\gamma_K$, where $ K$ varies in the set of irreducible components of $\pi^{-1}(C)=\bar C_1\cup\cdots\cup\bar C_m\cup E_1\cup\cdots E_n$, with the following relations
\[\prod_K\gamma_K^{K\cdot E_i}=\gamma_{E_i}^{E_i\cdot E_i}\prod_{K\neq E_i} \gamma_{K}^{K\cdot E_i}=1,\quad i=1,\ldots,n.\]
Then $H_1(\mb B\setminus C,\Z)=\bigoplus_{K}\Z c_K/\langle \sum_K K\cdot E_i c_K,i=1,\ldots,n\rangle=\Z c_1\oplus\cdots\oplus\Z c_m$, where $c_i=c_{\bar C_i}$ is a cycle of index $1$ around $f_i=0$ and $0$ around $f_j=0$ if $j\neq i$.
Thus 
\[(c_{E_1},\ldots, c_{E_n})A=-\sum_{i=1}^m(\bar C_i\cdot E_1,\ldots,\bar C_i\cdot E_n)c_i\]
and
\[(c_{E_1},\ldots, c_{E_n})=-\sum_{i=1}^m(\bar C_i\cdot E_1,\ldots,\bar C_i\cdot E_n)A^{-1}c_i.\]
Since $\frac{1}{2\pi\sqrt{-1}}\int_{c_i}\frac{d\pi^*f_i}{\pi^* f_i}=\frac{1}{2\pi\sqrt{-1}}\int_{\pi(c_i)}\frac{df_i}{f_i}=1$,
the  vector of vanishing orders of $\pi^*f_i$ along each $E_j$ is 
\begin{equation}\label{M}
M_{f_i}=\left(\frac{1}{2\pi\sqrt{-1}}\int_{c_{E_j}}\frac{d\pi^*f_i}{\pi^*f_i}\right)_j=-A^{-1}(\bar C_i\cdot E_j)_j=-A^{-1}S_{C_i}.
\end{equation}
By linearity, $M_f=M_{f_1\cdots f_m}=\sum_{i=1}^m-A^{-1}S_{C_i}=-A^{-1}S_C$.
\end{remark}

 We recover the results \cite[Proposition 7]{Brunella} and 
\cite[Th\'eor\`eme 3.3]{CL}.
\begin{proposition}\label{LC}
Let $\F$ be a non-dicritical foliation and let $C$ be the union of all its se\-pa\-ra\-tri\-ces. Then
$GSV_0(\F,C)=0$ if and only if $\F$ is a generalized curve.
\end{proposition}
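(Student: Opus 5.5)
Since $\F$ is non-dicritical, the balanced divisor is unique, effective and equal to the sum of all isolated separatrices, i.e. $\mc B=C$ (formal weak separatrices included). We therefore apply Theorem~\ref{Indices}(3) with $C=\mc B$. Since $S_\F-S_{\mc B}=T_\F$, it gives
\[GSV_0(\F,C)=\langle -A^{-1}T_\F,S_C\rangle+\mathfrak T_\F(C).\]
The plan is to show that both summands are non-negative and that they vanish simultaneously exactly when $SN(\bar\F)=\emptyset$.

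For the second summand, every non-tangent saddle-node $p$ is, in the non-dicritical case, attached to an isolated separatrix, namely the one through $p$ transversal to $E$. So $p\in\bar C$, and it contributes $\mu_p(\bar\F)-1\ge 1$ to $\mathfrak T_\F(C)$. Hence $\mathfrak T_\F(C)\ge 0$, with equality if and only if $SN(\bar\F)\setminus TSN(\bar\F)=\emptyset$.

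For the first summand, we use Remark~\ref{SVK}: $-A^{-1}S_C=M_f$ is the vector of vanishing orders of $\pi^*f$ along the $E_j$, where $f$ is a reduced equation of $C$. Each entry of $M_f$ is at least $\nu_0(C)\rho_{E_j}\ge 1$, because $C$ is nonempty and $f\in\mathfrak m$. By symmetry of $A$,
\[\langle -A^{-1}T_\F,S_C\rangle=\langle T_\F,M_f\rangle.\]
Since $T_\F$ has non-negative entries, this is $\ge 0$, with equality if and only if $T_\F=0$. Now $T_\F=0$ means that $\mu_q(\bar\F,E_i)=1$ for every tangent saddle-node $q$ and every $E_i\ni q$. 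This fails whenever $TSN\neq\emptyset$, because the weak separatrix of $q$ lies in some $E_i$, and there $\mu_q(\bar\F,E_i)=\mu_q(\bar\F)\ge 2$. So the first summand vanishes exactly when $TSN(\bar\F)=\emptyset$.

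Combining the two steps, $GSV_0(\F,C)=0$ if and only if $SN(\bar\F)=\emptyset$, that is, if and only if $\F$ is a generalized curve. The main point to verify carefully is the strict positivity of $M_f$ (equivalently, that $-A^{-1}$ has positive entries), since strictness of the inequality depends on it. We handle it via Remark~\ref{SVK} together with $-A^{-1}=F^{-1}(F^{-1})^\mathsf{T}$; the first column of $F^{-1}$ is the weight vector $\rho_E$, whose entries are all positive. A secondary check is that a corner saddle-node also contributes positively to $T_\F$, which holds because its weak separatrix is one of the two components through it.
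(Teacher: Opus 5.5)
Your proof is correct and follows the paper's argument. You identify $C$ with the balanced divisor, apply Theorem~\ref{Indices}(3) to write $GSV_0(\F,C)=\langle T_\F,-A^{-1}S_C\rangle+\mathfrak T_\F(C)$, and conclude from the strict positivity of $M_C=-A^{-1}S_C$ (Remark~\ref{SVK}) together with $T_\F\ge 0$. Your extra justification of that positivity, via $M_C=F^{-1}\nu(C)\ge \nu_0(C)\rho_E$ entrywise, is a valid and slightly more explicit substitute for the paper's appeal to vanishing orders.
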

\begin{proof}
If $\F$ is non-dicritical and $C$ is the union of all its separatrices then
\[GSV_0(\F,C)=
\langle T_\F,\underbrace{-A^{-1}S_C}_{M_C}\rangle+
\underbrace{\sum_{p\in SN(\bar\F)\setminus TSN(\bar\F)}(\mu_p(\bar\F)-1)}_{\ge 0\text{ and $=0\Leftrightarrow SN=TSN$}}.\]
Since $M_C$ is a vector whose all their entries are strictly positive (see Remark~\ref{SVK}), we deduce that $\langle T_\F,M_C\rangle\ge 0$ and $\langle T_\F,M_C\rangle=0$ if and only if $T_\F=0$.
Thus, we conclude that 
\[GSV_0(\F,C)=0\Leftrightarrow SN(\bar\F)=\emptyset\Leftrightarrow\F\text{ is generalized curve.}\]
\end{proof}

\begin{lemma}\label{SBT}
For every foliation $\F$ with balanced divisor $\mc B$ we have that all the entries of the vector $-A^{-1}S_{\mc B}$ are strictly positive.
In particular,
$\langle -A^{-1}S_{\mc B},T_\F\rangle\ge 0$
and equality holds if and only if $\F$ is of second type.
\end{lemma}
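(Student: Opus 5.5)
The plan is to identify $-A^{-1}S_{\mc B}$ with the vector of vanishing orders of an equation of $\mc B$ along the components $E_j$, and to prove positivity by induction on the number $n$ of blow-ups, reusing the block decomposition from the proof of Theorem~\ref{ell}. By \eqref{-A=F^TF} and Lemma~\ref{multiplicidades de curva} we have
\[-A^{-1}S_{\mc B}=F^{-1}(F^{-1})^{\mathsf T}S_{\mc B}=F^{-1}\nu(\mc B)=:m_{\mc B}.\]
By Remark~\ref{F}, the entries of $F^{-1}$ are non-negative weights, so $(m_{\mc B})_j=\sum_k \rho^{(k)}_{E_j}\,\nu_{p_{k-1}}(\bar{\mc B}_{k-1})$. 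When $\mc B$ is effective this is the vector $M_{\mc B}$ of Remark~\ref{SVK}. For a general balanced divisor we take this formula as the definition, since it is linear in $\mc B$.

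For the induction, let $\pi_0$ be the first blow-up, and let $p_1,\dots,p_r\in E_1$ and $\mc B=\sum_{i=0}^r\mc B_i$ be as in the proof of Theorem~\ref{ell}. Pulling back first by $\pi_0$, the first entry of $m_{\mc B}$ is $m_1=\nu_0(\mc B)$. The block of entries over $p_i$ is the vanishing-order vector of the local divisor $\bar{\mc B}_i+m_1E_1$ at $p_i$. We rewrite this as
\[(m_{\mc B})_{\text{over }p_i}=m_{\bar{\mc B}_i+\iota_1E_1}+(m_1-\iota_1)\,M_{E_1}^{(i)},\]
where $M^{(i)}_{E_1}=-A_i^{-1}S^i_{E_1}\ge0$ is the vanishing-order vector of a local equation of $E_1$. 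As noted in the proof of Theorem~\ref{ell}, $\bar{\mc B}_i+\iota_1E_1$ is a balanced divisor for $\F_i$ adapted to $\pi_i$. The induction hypothesis then gives $m_{\bar{\mc B}_i+\iota_1E_1}>0$ entrywise. So the whole induction reduces to the claim
\[m_1=\nu_0(\mc B)\ \ge\ \max(1,\iota_1),\]
which is also exactly the base case $n=1$, because there $-A^{-1}S_{\mc B}=\bar{\mc B}\cdot E_1=\nu_0(\mc B)$.

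I expect proving $\nu_0(\mc B)=\bar{\mc B}\cdot E_1+\sum_i\rho_i^{\mathsf T}S_{\mc B_i}\ge1$ to be the main obstacle, because $\mc B$ may have a polar part. I would handle it by induction as well, using the same splitting: the contribution of each $p_i$ equals $\nu_{p_i}(\bar{\mc B}_i)=\nu_{p_i}(\bar{\mc B}_i+\iota_1E_1)-\iota_1$, which is $\ge 1-\iota_1\ge0$ by induction applied to the first entry. It then remains to show $\bar{\mc B}_0\cdot E_1$ plus these terms is positive. If $E_1$ is dicritical, the balance condition gives $\sum_B a_B\bar B\cdot E_1=2-\mr{val}_{E_1}$, and each of the $\mr{val}_{E_1}$ points $p_i$ contributes at least $1$ (there $\iota_1=0$), so the total is at least $2$. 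If $E_1$ is invariant, $E_1$ carries at least one singular point by Camacho–Sad. Either it is a non-corner point, where an isolated separatrix (possibly the strong separatrix of a tangent saddle-node) attaches with coefficient $+1$, or it is some $p_i$ that is blown up further, whose contribution is at least $0$. In the latter case we refine and use that the local balanced divisor at $p_i$ has $\nu\ge1$, which again gives a contribution $\ge1-\iota_1+\dots$. If that bookkeeping gets tight, my fallback is Genzmer's formula, Proposition~\ref{Genzmer}, applied in the form $\nu_0(\mc B)=\nu_0(\F)+1-\tau_0(\F)$ together with Theorem~\ref{ell}.

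Finally, the ``in particular'' part is immediate once positivity is established. Since $T_\F\ge0$ entrywise, $\langle -A^{-1}S_{\mc B},T_\F\rangle\ge0$, with equality if and only if $T_\F=0$. By the earlier Remark, $T_\F=0$ holds if and only if $\F$ is of second type.
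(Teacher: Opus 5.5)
Your inductive reduction is sound. Since $\rho_i=-F_i^{-1}f_i=M^{(i)}_{E_1}$ (using $S^i_{E_1}=-F_i^{\mathsf T}f_i$), the block of $-A^{-1}S_{\mc B}$ over $p_i$ is indeed $m_{\bar{\mc B}_i}+\nu_0(\mc B)\,M^{(i)}_{E_1}$, so everything reduces to $\nu_0(\mc B)\ge 1$. That inequality is where the difficulty lies, and your argument for it does not close.

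\textbf{The identity for the contribution of $p_i$ is false.} The $\rho_i$ are weights relative to $p_0$, not to $p_i$, so
\[\rho_i^{\mathsf T}S_{\mc B_i}=\langle M^{(i)}_{E_1},S_{\mc B_i}\rangle=i_{p_i}(E_1,\bar{\mc B}_i)=(m_{\bar{\mc B}_i})_k ,\]
where $E_{ik}$ is the component over $p_i$ met by the strict transform of $E_1$. This is an entry of $m_{\bar{\mc B}_i}$, but not the first one, and it is not $\nu_{p_i}(\bar{\mc B}_i)$. For $d(y^2+x^3)$ one has $\nu_0(\mc B)=2$, while your formula gives $\bar{\mc B}\cdot E_1+\nu_{p_1}(\bar{\mc B}_1)=0+1=1$. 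On effective parts the error goes in the harmless direction, but on the polar part $-B$ it goes the wrong way.

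\textbf{The dicritical case can be repaired.} If $E_1$ is dicritical, $\bar{\mc B}_i$ is itself balanced, so your full inductive hypothesis gives $(m_{\bar{\mc B}_i})_k\ge 1$.

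\textbf{The invariant case is a genuine gap.} If $E_1$ is invariant, the correct contribution is $(m_{\bar{\mc B}_i+E_1})_k-\rho_{E_{ik}}$. Induction only bounds this below by $1-\rho_{E_{ik}}$, which is negative as soon as the strict transform of $E_1$ passes through a further center over $p_i$. So you establish neither non-negativity of these terms nor the strict positivity needed when every singular point of $\pi_0^*\F$ on $E_1$ is blown up further; your sketch stops exactly there. The fallback is circular: Genzmer's formula only rewrites $\nu_0(\mc B)\ge 1$ as $\tau_0(\F)\le\nu_0(\F)$, for which you give no independent argument.

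What is missing is a global control of the polar part, which the local hypothesis ``$m_{\bar{\mc B}_i+E_1}>0$'' cannot see. The paper works with a minimal balanced divisor $\mc B_0-\mc B_\infty$. It pairs each $B\in\mc B_\infty$, transverse to a dicritical $D$ of valence $\ge 3$, injectively with an isolated separatrix $\tilde B$ through one of the at least $\mr{val}_D-2$ points of $D$ blown up after $D$ appears; these separatrices exist by the separatrix theorem. Then $\nu(\tilde B)\ge\nu(B)$ entrywise, and $\mc B=\hat{\mc B}_0+(\tilde{\mc B}_0-\mc B_\infty)$ with $\hat{\mc B}_0\neq 0$ effective. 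Positivity follows from $M_{\hat{\mc B}_0}>0$ (Remark~\ref{SVK}) together with $F^{-1}\ge 0$. In your setting, this pairing applied to the separatrices through $p_i$ never uses $E_1$, since $E_1$ is not a separatrix of $\F$. It is precisely what would give $i_{p_i}(E_1,\bar{\mc B}_i)\ge 0$ in the invariant case. Once you have it, though, it proves the lemma directly and the induction becomes unnecessary.
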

\begin{proof}
It suffices to prove the assertion for a minimal balanced divisor.
The proof of \cite[Proposition 3.5]{FM} in fact shows that for each dicritical component $D$ of valence $\ge 3$ and for  each dicritical separatrix $B$ in $\mc B_\infty$ meeting $D$ we can associate an isolated separatrix $\tilde B$  such that
$\nu_{p_i}(B)\le \nu_{p_i}(\tilde B)$ for all $i=0,1,\ldots,n-1$. Moreover the correspondence $B\mapsto \tilde B$ is injective.  
In fact, we order the dicritical components of the exceptional divisor having valence at least 3 by order of appearance $D_1,\ldots,D_d$. 
The first time that $D_1$ appears in the reduction process, it has valence $0$, $1$ or $2$ (corresponding, respectively, to the blow-up at $p_0=0$ itself, at a non-corner singularity or at a corner singularity). Therefore at least $\mr{val}_{D_1} - 2$ points of $D_1$ will be blown-up in the subsequent steps of the reduction process and to each one of them we can associate an isolated separatrix thanks to the generalization \cite{O-B} of the separatrix theorem \cite{CS}, see also \cite{Rosas}. Since the number of elements $B$ in $\mc B_\infty$ is $\mr{val}_{D_1} - 2$, we have the desired correspondece $B\mapsto \tilde B$ for the discritical separatrices $B$ transverse to the dicritical component $D_1$. The same procedure applies to $D_2$ and the remaining dicritical components, see Figure~\ref{dibujo}.
Still needs to see that $\nu_{p_i}(B)\le \nu_{p_i}(\tilde B)$.
Since $B$ is a dicritical separatrix once the dicritical component $D_1$ appears, the strict transform of $B$ does not pass through the  centers $p_{k},\ldots,p_{n-1}$ of the remaining blow-ups and consequently $\nu_{p_i}(B)=0\le \nu_{p_i}(\tilde B)$ for $i=k,\ldots,n-1$.
We consider the vectors $S_B=(0,\ldots,0,1)^\mathsf{T}$
and
$S_{\tilde B}$
 associated to the composition of blow-ups $\pi_{p_{0}}\circ\pi_{p_1} \circ\cdots\circ\pi_{p_{k-1}}$.
 The entries of $S_{\tilde B}$ are all $\ge 0$ and the last one is $\ge 1$ so that the entries of $S_{\tilde B}-S_B$ are all $\ge 0$.
  By Lemma~\ref{multiplicidades de curva},  all the entries of the vector $\nu(\tilde B)-\nu(B)=(F^{-1})^\mathsf{T}(S_{\tilde B}-S_B)$ are $\ge 0$ because this property is satisfied by the matrix $(F^{-1})^\mathsf{T}$ (see Corollary~\ref{weights} and Remark~\ref{F}).
  
This allows to write $\mc B_0 =\hat{\mc B}_0+\tilde{\mc B}_0$ and 
$\mc B=\hat{\mc B}_0+(\tilde{\mc B}_0-\mc B_\infty)$
with  $\hat{\mc B}_0$ and $\tilde{\mc B}_0$ effective and  $\nu_{p_i}(\tilde{\mc B}_0-\mc B_\infty)\ge 0$ for each $i=0,\ldots,n-1$. Moreover, $\hat{\mc B_0}\neq0$ because the valence of $D$ minus one is greater or equal than the number of dicritical separatrices in $\mc B_\infty$ meeting $D$. Thus, we have
\[-A^{-1}S_{\mc B}=-A^{-1}S_{\hat{\mc B}_0}-A^{-1}S_{\tilde{\mc B}_0-\mc B_\infty}=-A^{-1}S_{\hat{\mc B}_0}+F^{-1}(F^{-1})^\mathsf{T}S_{\tilde{\mc B}_0-\mc B_\infty}.\]
Since
all the entries of $F^{-1}$ are $\ge 0$ (see Remark~\ref{F} and Lemma~\ref{weights}), all the entries of the vector
$(F^{-1})^\mathsf{T}S_{\tilde{\mc B}_0-\mc B_\infty}=\nu(\tilde{\mc B}_0-\mc B_\infty)$ are non-negative
and  all the entries of the vector $-A^{-1}S_{\hat{\mc B}_0}$
are strictly positive because $\hat{\mc B}_0$ is a non-trivial effective divisor (see (\ref{M}) in Remark~\ref{SVK}).
\end{proof}

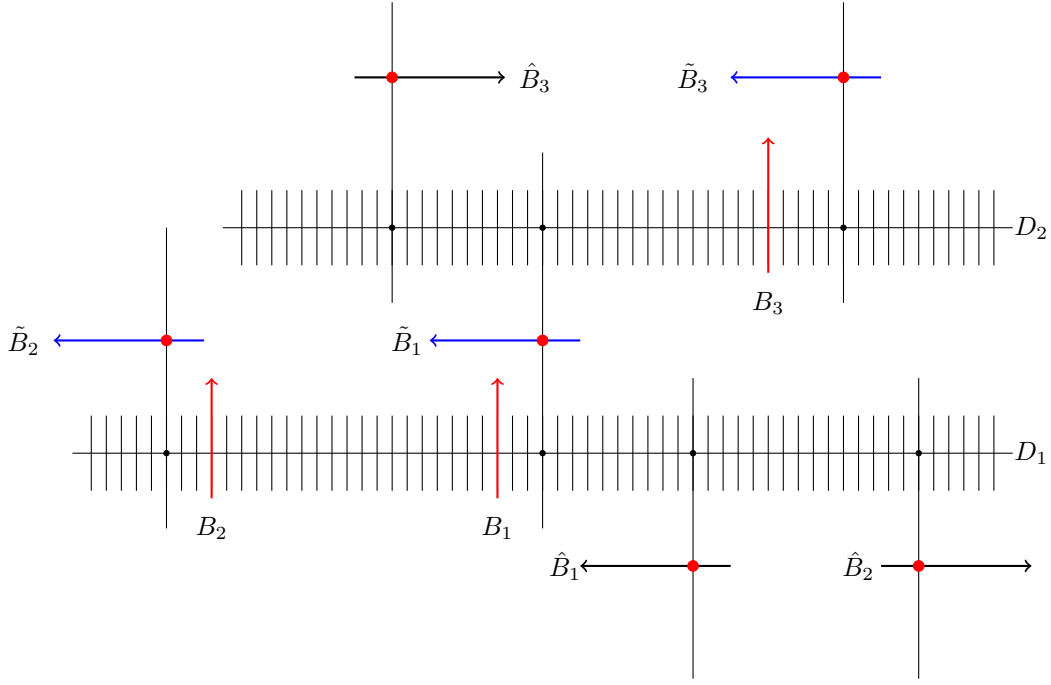
\begin{figure}[t]
\begin{tikzpicture}
 \node at (6.5,0) {$D_1$};
\draw (-6.25,0) to (6.25,0);
\draw (0,-1) to (0,4);
\draw (-5,-1) to (-5,3);
\draw (2,1) to (2,-3);
\draw (5,1) to (5,-3);
\foreach \k [evaluate=\k as \x using 0.2*\k] in {-30,...,30}{
    \draw (\x, -0.5) -- (\x, 0.5);
}
 \draw (-4.25,3) to (6.25,3);
 \node at (6.5,3) {$D_2$};
\foreach \k [evaluate=\k as \x using 0.2*\k] in {-20,...,30}{
    \draw (\x, 2.5) -- (\x, 3.5);
}
  \draw[->,blue,thick] (-4.5,1.5) to (-6.5,1.5);
  \draw[->,blue,thick] (.5,1.5) to (-1.5,1.5);
  \draw[->,thick] (4.5,-1.5) to (6.5,-1.5);
  \draw[->,thick] (2.5,-1.5) to (0.5,-1.5);
\draw (4,2) to (4,6);
\draw[->,blue,thick] (4.5,5) to (2.5,5);
\draw (-2,2) to (-2,6);
\draw[->,thick] (-2.5,5) to (-.5,5);
\draw[->,red,thick] (-.6,-.6) to (-.6, 1);
\node at (2,5) {$\tilde B_3$};
\node at (-.6,-1) {$B_1$};
\node at (-1.8,1.5) {$\tilde B_1$};
\draw[->,red,thick] (-4.4,-.6) to (-4.4, 1);
\node at (-4.4,-1) {$B_2$};
\node at (-6.9,1.5) {$\tilde B_2$};
\draw[->,red,thick] (3,2.4) to (3,4.2);
\node at (3,2) {$B_3$};
\node at (.3,-1.5) {$\hat B_1$};
\node at (4.2,-1.5) {$\hat B_2$};
\node at (-0.1,5) {$\hat B_3$};
\draw [fill] (0,0) circle [radius=1pt];
\draw [fill] (-5,0) circle [radius=1pt];
\draw [fill] (2,0) circle [radius=1pt];
\draw [fill] (5,0) circle [radius=1pt];
\draw [fill] (-2,3) circle [radius=1pt];
\draw [fill] (0,3) circle [radius=1pt];
\draw [fill] (4,3) circle [radius=1pt];
\draw [fill,red] (-2,5) circle [radius=2pt];
\draw [fill,red] (4,5) circle [radius=2pt];
\draw [fill,red] (-5,1.5) circle [radius=2pt];
\draw [fill,red] (0,1.5) circle [radius=2pt];
\draw [fill,red] (2,-1.5) circle [radius=2pt];
\draw [fill,red] (5,-1.5) circle [radius=2pt];
\end{tikzpicture}
\caption{Symbolic illustration of the correspondence $B_i\leadsto\tilde B_i$ between dicritical and isolated separatrices 
and the divisor $\hat{\mc B}_0=\hat B_1+\hat B_2+\hat B_3$.}\label{dibujo}
\end{figure}

Now we are able to give an extension of  \cite[Proposition 9]{Brunella}, see also  \cite[Theorem I]{FM}.

\begin{theorem}\label{Pi}
For an arbitrary foliation $\F$ with balanced divisor $\mc B$ we have that 
\begin{align*}
Var_0(\F,\mc B)-CS_0(\F,\mc B)&=\hphantom{2}\mathfrak{T}_\F(\mc B)+\hphantom{2}\langle -A^{-1}S_{\mc B},T_\F\rangle\\
&=\hphantom{2}\Delta_0(\F,\mc B)
{\ge}0
,\\
BB_0(\F)-Var_0(\F,\mc B)&=\hphantom{2}\mathfrak{T}_\F(\mc B)+\hphantom{2}\langle -A^{-1}S_{\mc B},T_\F\rangle+\langle -A^{-1}T_\F,T_\F\rangle\\
&=\hphantom{2}\Delta_0(\F,\mc B)+\|(F^{-1})^\mathsf{T}T_\F\|^2\ge 0,\\
BB_0(\F)-CS_0(\F,\mc B)&=
2\mathfrak{T}_\F(\mc B)+2\langle -A^{-1}S_{\mc B},T_\F\rangle+\langle -A^{-1}T_\F,T_\F\rangle\\
&=2\Delta_0(\F,\mc B)+\|(F^{-1})^\mathsf{T}T_\F\|^2\ge 0.
\end{align*}
Moreover, the following assertions are equivalent:
\begin{enumerate}[(i)]
\item $\F$ is a generalized curve (i.e. $\mathfrak{T}_\F(\mc B)=0$ and $T_\F=0$),
\item $Var_0(\F,\mc B)=CS_0(\F,\mc B)$,
\item $BB_0(\F)=Var_0(\F,\mc B)$,
\item $BB_0(\F)=CS_0(\F,\mc B)$.
\end{enumerate}
\end{theorem}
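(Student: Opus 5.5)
The plan is to derive all three identities from the formulas in Theorem~\ref{Indices}, applied with $C=\mc B$. The approach is to compare the reduced-level contributions singularity by singularity.

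\textbf{Step 1: $Var-CS$.} Write $\mc B=\mc B_0-\mc B_\infty$ and use Theorem~\ref{Indices}(2). The quadratic terms combine to
\[
\langle -A^{-1}S_\F-\iota,S_{\mc B}\rangle-\langle -A^{-1}S_{\mc B},S_{\mc B}\rangle=\langle -A^{-1}T_\F,S_{\mc B}\rangle-\langle\iota,S_{\mc B}\rangle,
\]
using $S_\F=S_{\mc B}+T_\F$. The local terms are handled at each reduced point $p\in E\cap\bar{\mc B}$ by the relation $Var_p=CS_p+GSV_p$ of \eqref{VarCS}.
\begin{itemize}
\item Components of $\mc B_\infty$ are dicritical separatrices. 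They pass through regular points of $\bar\F$ transverse to a dicritical component, so $GSV_p=0$ there.
\item $CS_p(\bar\F,\bar B)$ for such a polar component should be equal to minus its variation, up to sign conventions. I need to check that the two polar conventions (CS adds, Var subtracts) agree at regular points. There $CS_p$ of a smooth invariant curve through a regular transverse point is $0$, and so is $Var_p$, so this is consistent.
\end{itemize}
Summing \eqref{sumGSV} over the isolated part gives $\sum GSV_p=\langle\iota,S_{\mc B}\rangle+\mathfrak T_\F(\mc B)$. This cancels $-\langle\iota,S_{\mc B}\rangle$, leaving $\mathfrak T_\F(\mc B)+\langle -A^{-1}S_{\mc B},T_\F\rangle$, using the symmetry of $A^{-1}$. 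Corollary~\ref{Delta}, extended by linearity to $C=\mc B$, identifies this with $\Delta_0(\F,\mc B)$. Nonnegativity follows from $\mathfrak T_\F\ge0$ and Lemma~\ref{SBT}.

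\textbf{Step 2: $BB-Var$.} Use Theorem~\ref{Indices}(4). The quadratic part minus $\langle -A^{-1}S_\F-\iota,S_{\mc B}\rangle$ equals
\[
\langle -A^{-1}S_\F,T_\F\rangle-\langle S_\F,\iota\rangle-\langle A\iota,\iota\rangle-\langle T_\F,\iota\rangle .
\]
I then expand $S_\F=S_{\mc B}+T_\F$. What remains is the local comparison $\sum_{p\in\mathrm{Sing}}BB_p-\sum_p Var_p(\bar\F,\bar{\mc B}_0)$. This is the step I expect to be the main obstacle.

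The key reduced-level fact is that at each reduced singularity, $BB_p$ equals the variation along the union of both local separatrices. For a non-degenerate point this is $\lambda_1/\lambda_2+\lambda_2/\lambda_1+2$, split as $(CS+GSV)$ for the two axes. For a saddle-node, $BB_p=Var_p(W)+Var_p(S)$, which one verifies on the normal form. The local invariant curves at $p$ are pieces of $\bar{\mc B}$ and components $E_i$ of the divisor. So I would write $\sum_p BB_p=\sum_pVar_p(\bar\F,\bar{\mc B}_0)+\sum_i\sum_{p\in E_i}Var_p(\bar\F,E_i)$, with polar dicritical curves contributing nothing.

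By the Camacho--Sad and variation index theorems on the compact curve $E_i$,
\[
\sum_{p\in E_i}CS_p=E_i^2,\qquad \sum_{p\in E_i}Var_p=E_i^2+\sum_p GSV_p(E_i)=N_{\bar\F}\cdot E_i .
\]
Here $\sum_p GSV_p(E_i)$ equals $2-\mathrm{val}_{E_i}-\bar{\mc B}\cdot E_i$ plus the tangent saddle-node excess $(T_\F)_i$, the weak-direction saddle-nodes having $GSV=\mu$. These global terms, in matrix form $\langle A\iota,\iota\rangle$-type and $\langle S,\iota\rangle$-type expressions, should cancel the $\iota$-terms above. The leftover is $\langle -A^{-1}S_{\mc B},T_\F\rangle+\langle -A^{-1}T_\F,T_\F\rangle+\mathfrak T_\F(\mc B)$. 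I expect the bookkeeping of corner and dicritical adjacencies to be delicate, so as a cross-check I would verify it in the generalized curve case against \cite{FFMR}. The term $\langle -A^{-1}T_\F,T_\F\rangle$ equals $\|(F^{-1})^\mathsf{T}T_\F\|^2$ by \eqref{-A=F^TF}.

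\textbf{Step 3: the third identity and the equivalences.} The third identity is the sum of the first two. For the equivalences, each difference is a sum of nonnegative terms. The difference in (ii) vanishes iff $\mathfrak T_\F(\mc B)=0$ and $T_\F=0$, by Lemma~\ref{SBT}. The differences in (iii) and (iv) vanish under the same condition, since the norm vanishes iff $T_\F=0$ by invertibility of $F$. Finally, $T_\F=0$ means there are no tangent saddle-nodes. Given that, $\mathfrak T_\F(\mc B)=\mathfrak T_\F(\mc I)=\sum_{SN}(\mu_p-1)$ with every $\mu_p\ge2$, so its vanishing is equivalent to $SN(\bar\F)=\emptyset$, which is (i).
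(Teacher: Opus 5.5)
Your Step 1 is the paper's argument and is correct, and Step 3 is fine. The gap is in Step 2, exactly at your ``key reduced-level fact'': the identity $BB_p=Var_p(W)+Var_p(S)$ is false at a saddle-node. On the normal form $\omega_{\mu,\lambda}=x^\mu dy-y(1+\lambda x^{\mu-1})dx$ you get $CS_p(W)=\lambda$, $GSV_p(W)=\mu$, $CS_p(S)=0$ and $GSV_p(S)=1$. Hence $Var_p(W)+Var_p(S)=\mu+\lambda+1$, whereas a direct residue computation gives $BB_p=2\mu+\lambda$. The correct local statement is
\[
BB_p(\bar\F)=\sum_{L}Var_p(\bar\F,L)+\epsilon_p,\qquad \epsilon_p=\begin{cases}\mu_p(\bar\F)-1 & \text{if } p\in SN(\bar\F),\\ 0 & \text{otherwise,}\end{cases}
\]
where $L$ runs over the two local separatrices. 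This is not cosmetic. The total excess $\sum_{p\in SN(\bar\F)}(\mu_p(\bar\F)-1)=\mathfrak T_\F(\mc B)+\langle T_\F,u\rangle$ is precisely what produces the term $\mathfrak T_\F(\mc B)$ in $BB_0(\F)-Var_0(\F,\mc B)$. It is also what cancels the $-\langle T_\F,\iota\rangle$ in your quadratic part. If you run your bookkeeping with the identity as you state it, you get $\langle -A^{-1}S_\F,T_\F\rangle-\langle T_\F,u\rangle$, not the claimed leftover. For instance, $ydx+(y-x)dy$ is reduced by one blow-up to a single tangent saddle-node with $\mu=2$. There $BB_0(\F)=4$ and $Var_0(\F,\{y=0\})=2$, so the difference is $2$, as in the theorem, while your version gives $1$. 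So the sentence ``the leftover is \dots'' is asserted rather than derived.

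The same step also contains a miscount. The relation $\bar{\mc B}\cdot D=2-\mathrm{val}_D$ holds only for dicritical $D$. For an invariant $E_i$ one has $\sum_{p\in E_i}GSV_p(\bar\F,E_i)=\#(\mathrm{Sing}(\bar\F)\cap E_i)+(T_\F)_i$. Here $\#(\mathrm{Sing}(\bar\F)\cap E_i)=\bar{\mc B}\cdot E_i+\#\{j\neq i:\ E_j \text{ invariant},\ E_j\cap E_i\neq\emptyset\}$. Together with the Camacho--Sad theorem this gives $\sum_{E_i\text{ inv}}\sum_{p\in E_i}Var_p(\bar\F,E_i)=\langle A\iota,\iota\rangle+\langle S_{\mc B},\iota\rangle+\langle T_\F,u\rangle$. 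With both corrections your route does close. The local part $\sum_{p}BB_p-\sum_{p\in E\cap\bar{\mc I}}Var_p(\bar\F,\bar{\mc I})$ equals $\langle A\iota,\iota\rangle+\langle S_{\mc B},\iota\rangle+2\langle T_\F,u\rangle+\mathfrak T_\F(\mc B)$. Adding your quadratic part then gives exactly $\mathfrak T_\F(\mc B)+\langle -A^{-1}S_\F,T_\F\rangle$.

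That repaired argument is a genuinely different route from the paper's. The paper computes $BB_0-CS_0$ directly, comparing $BB_p-CS_p(\bar\F,\bar{\mc I})$ case by case at attaching points and corners and applying Camacho--Sad on each invariant component. It then obtains $BB_0-Var_0$ by subtraction. Your $Var$-based version is somewhat shorter, since $Var$ is additive and corners need no separate case analysis.
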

\begin{remark} The  equivalence $(i)\Leftrightarrow(iii)$ was proved in \cite[Theorem I]{FM}. 
\end{remark}

\begin{proof}
Using Theorem~\ref{Indices}, relation (\ref{VarCS}) and that $\mc B_\infty\cap E\cap\mr{Sing}(\bar\F)=\emptyset$ we get
\begin{align*}
Var_0(\F,\mc B)-CS_0(\F,\mc B)&=\sum_{p\in E\cap\bar{\mc B}} GSV_p(\bar\F,\mc B)+\langle -A^{-1}(S_\F-S_{\mc B})-\iota,S_{\mc B}\rangle\\
&=\langle -A^{-1}(S_\F-S_{\mc B}),S_{\mc B}\rangle+\mathfrak T_\F(\mc B)\\
&=\mathfrak T_\F(\mc B)+\langle \underbrace{S_\F-S_{\mc B}}_{T_\F},-A^{-1}S_{\mc B}\rangle=\Delta_0(\F,\mc B),
\end{align*}
where in the second equality we use formula (\ref{sumGSV}) and Corollary~\ref{Delta} in the last one. The fact, that $\Delta_0(\F,\mc B)\ge 0$ follows from Lemma~\ref{SBT}.

Let us prove now the third equality (the second  will follow from the first and the third).
Recall that if $p$ is a reduced singularity of a foliation germ $\F$ with (formal) separatrices $S=S_+\cup S_-$ then
\begin{align*}
BB_p(\F)&=\left\{\begin{array}{ll} \lambda+\frac{1}{\lambda}+2 & \text{if $p$ is non-degenerate with $\{CS_p(\F,S_\pm)\}=\{\lambda^{\pm 1}\}$,}
\\
2\mu+\lambda & 
\text{if $p$ is saddle-node with normal form $\omega_{\mu,\lambda}=x^\mu dy-y(1+\lambda x^{\mu-1})dx$.}
\end{array}\right.
\end{align*}
Moreover, $CS_0(\omega_{\mu,\lambda},x=0)=0$ and $CS_0(\omega_{\mu,\lambda},y=0)=\lambda$.

Let $\mc I$ be the isolated separatrices of $\F$ and $\bar{\mc I}\cap E=\{p_1,\ldots,p_M\}$ and $\lambda_j=CS_{p_j}(\bar\F,\bar{\mc I})$ for each $j=1,\ldots,M$. Let us denote by \( E^i_1, \ldots, E^i_k \) the invariant components of \( E \), and let \( q_1, \ldots, q_N \) be the intersection points between any two distinct components among them. According to Theorem~\ref{Indices},
\[BB_0(\F)=\sum_{p\in\mr{Sing}(\bar\F)} BB_p(\bar\F) +\langle -A^{-1}S_\F,S_\F\rangle-2\underbrace{\langle S_\F,\iota\rangle}_{\underbrace{\langle T_\F,\iota\rangle}_{\langle T_\F,u\rangle}+\langle S_{\mc B},\iota\rangle}-\langle A\iota,\iota\rangle\]
and
\[CS_0(\F,\mc B)=\sum_{p\in\mc I\cap E} CS_p(\F,\bar{\mc I})+\langle -A^{-1}S_{\mc B},S_{\mc B}\rangle.\]
Notice that
\begin{enumerate}[(a)]
\item
If $p_i\notin SN(\bar\F)$ then $BB_{p_i}(\bar\F)-CS_{p_i}(\bar\F,\bar{\mc I})={2}+CS_{p_i}(\bar\F,E^i_{p_i})$.
\item
If $p_i\in  TSN(\bar\F)\setminus CSN(\bar\F)$ then  $BB_{p_i}(\bar\F)-CS_{p_i}(\bar\F,\bar{\mc I})=2\mu_{p_i}(\bar\F)+CS_{p_i}(\bar\F,E^i_{p_i})$.
\item
If $p_i\in SN(\bar\F)\setminus TSN(\bar\F)$ then  $BB_{p_i}(\bar\F)-CS_{p_i}(\bar\F,\bar{\mc I})=2\mu_{p_i}(\bar\F)+CS_{p_i}(\bar\F,\bar{\mc I})-CS_{p_i}(\bar\F,\bar{\mc I})=2\mu_{p_i}(\bar\F)$.
\end{enumerate}
Denoting $X=\langle -A^{-1}T_\F,T_\F\rangle+2\langle -A^{-1}S_{\mc B}-u,T_\F\rangle$, we have that $BB_0(\F)-CS_0(\F,\mc B)-X$ is equal to
\begin{align*}
&\hphantom{=} \sum_{p\in\mr{Sing}(\bar\F)}BB_p(\bar\F)-\sum_{p\in E\cap\bar{\mc I}}CS_p(\bar\F,\bar{\mc I})-2\langle S_{\mc B},\iota\rangle-\langle A\iota,\iota\rangle\\
&=\sum_{i=1}^M(BB_{p_i}(\bar\F)-CS_{p_i}(\bar\F,\bar{\mc I}))+\sum_{j=1}^NBB_{q_j}(\bar\F)-2\langle S_{\mc B},\iota\rangle-\langle A\iota,\iota\rangle\\
&=\sum_{m=1}^k\left[\sum_{p_i\in E_m^i{\setminus SN(\bar\F)}}({2+}CS_{p_i}(\bar\F,E_m^i))+\sum_{p_i\in {E^i_m\cap T}SN(\bar\F)\setminus CSN(\bar\F)}(2\mu_{p_i}(\bar\F){+CS_{p_i}(\bar\F,E^i_m)})\right]\\
 & \hspace{5mm}
 +{\sum_{m=1}^k\sum_{p_i\in E^i_m\cap SN(\bar\F)\setminus TSN(\bar\F)}2\mu_{p_i}(\bar\F)}+2(N-|CSN(\bar\F)|)
{+\sum_{q_j\notin CSN(\bar\F)}(CS_{q_j}(E_{q_j}^1)+CS_{q_j}(E_{q_j}^2))}\\
&\hspace{5mm}
+\sum_{q_j\in CSN}(2\mu_{q_j}(\bar\F){+CS_{q_j}(\bar\F,E^w_{q_j}))}-2\langle S_{\mc B},\iota\rangle-\langle A\iota,\iota\rangle\\
&\stackrel{\text{(C-S)}}{=}\sum_{m=1}^k(E_m^i)^2+2\sum_{p\in SN(\bar\F)}\mu_p(\bar\F)+2N-2|CSN(\bar\F)|-(E_1^i+\cdots+E_k^i)^2
\\&\hspace{5mm}+2|\{p_i\}_{i=1}^M\setminus SN(\bar\F)|
-2\langle S_{\mc B},\iota\rangle
\\
&= 2\sum_{p\in SN(\bar\F)}(\mu_p(\bar\F)-1)=2(\mathfrak{T}_\F(\mc B)+\langle T_\F,u\rangle).
\end{align*}
because $|\{p_i\}_{i=1}^M\setminus SN(\bar\F)|=|\{p_i\}_{i=1}^M\setminus(SN(\bar\F)\setminus CSN(\bar\F))|=M-(|SN(\bar\F)|-|CSN(\bar\F)|)$, $\langle S_{\mc B},\iota\rangle=M$ and $\sum_{r\neq s}E_r^i\cdot E_s^i=2N$. Notice that in the equality (C-S) we have used the Camacho-Sad index theorem.

Finally, the equivalence between assertions (i)-(iv) is clear by the previous equalities and Lemma~\ref{SBT}.
\end{proof}

\section{About the order in the sequence of blow-ups}

Let $\Pi=(\pi_{p_0},\ldots,\pi_{p_{n-1}})$ be an ordered sequence of blow-ups $\pi_{p_i}$, $i=0,\ldots,n-1$ of centers \[p_i\in(\pi_{p_{0}}\circ\cdots\circ \pi_{p_{i-1}})^{-1}(p_0),\] with $p_0=0\in\C^2$. Denote $\pi_\Pi=\pi_{p_{0}}\circ\cdots\circ\pi_{p_{n-1}}$.
Notice that there may exist (admissible) permutations $\sigma$ of $\{0,1,\ldots,n-1\}$ (with $\sigma(0)=0$) such that $\pi_{\Pi^\sigma}=\pi_\Pi$, where $\Pi^\sigma=(\pi_{\sigma(0)},\ldots,\pi_{\sigma(n-1)})$.
Notice that the vectors $S_C^\Pi$ and $\nu_C^\Pi$ depends on $\Pi$ and not only on the composition $\pi_\Pi$.
Each admissible permutation $\sigma$ induces a (orthogonal) permutation matrix $\Sigma_\sigma$ 
such that $S_C^{\Pi^\sigma}=\Sigma_\sigma^{-1}S_C^\Pi$ and $\nu_C^{\Pi^\sigma}=\Sigma_\sigma^{-1}\nu_C^\Pi$. Consequently, $F_{\Pi^\sigma}=\Sigma_\sigma^\mathsf{T} F_\Pi\Sigma_\sigma$ and $-A_{\Pi^\sigma}^{\pm 1}=\Sigma_\sigma^\mathsf{T}(-A_\Pi^{\pm 1})\Sigma_\sigma$.

\begin{example} 
Let 
$\pi_0$ be the blow-up of the origin and $p_1,p_2$ two different points in the exceptional divisor $\pi_0^{-1}(0)$.  Then
$\pi_0\circ\pi_{p_1}\circ\pi_{p_2}=\pi_0\circ\pi_{p_2}\circ\pi_{p_1}$.

\end{example}

One can check that all the formulas we give are invariant by these admissible permutations.

On the other hand, once a composition of explosions has been performed, we can rearrange the components of the exceptional divisor and change all the vectors accordingly. 
More precisely, given an arbitrary ordering in the irreducible components of the exceptional divisor, we construct the corresponding intersection matrix $A'$ which satisfies
$A'=\Sigma^\mathsf{T}A\Sigma$ for a permutation matrix $\Sigma$. Then we define $F'=\Sigma^\mathsf{T}F\Sigma$ which is not necessarily lower triangular but satisfies $-(F')^\mathsf{T}F'=-A'$.
The formulas we have obtained apply without problems to the new arrangement.
For instance, the quantity $\mu_0(\F)-1-\mathfrak T_\F(\mc B)$ in Theorem~\ref{muF} can be computed~as
\begin{align*}
&\langle (-A'^{-1})S'_\F-(I+F'^{-1})u,S'_\F\rangle\\
=&\langle \Sigma^\mathsf{T}(-A^{-1})\Sigma \Sigma^{-1}S_\F-(I+\Sigma^\mathsf{T}F^{-1}\Sigma)u,\Sigma^{-1}S_\F\rangle\\
=&\langle \Sigma^\mathsf{T}(-A^{-1})\Sigma \Sigma^{-1}S_\F-\Sigma^\mathsf{T}(I+F^{-1})\Sigma u,\Sigma^{-1}S_\F\rangle\\
=&\langle \Sigma^\mathsf{T}(-A^{-1})\Sigma \Sigma^{-1}S_\F-\Sigma^\mathsf{T}(I+F^{-1})u,\Sigma^{-1}S_\F\rangle\\
=&\langle \Sigma^\mathsf{T}((-A^{-1})S_\F-(I+F^{-1})u),\Sigma^\mathsf{T}S_\F\rangle\\
=&\langle (-A^{-1})S_\F-(I+F^{-1})u,S_\F\rangle. 
\end{align*}

\begin{example}  Let us consider the minimal reduction of singularities of the foliation $\F$ defined by $d(y^2+x^3)$. The exceptional divisor has three irreducible components $E_1,E_2,E_3$ listed in order of appearance.
Consider the rearrangement $E_1'=E_2$, $E_2'=E_3$ and $E_3'=E_1$.
The intersection matrices associated to the two orderings and the corresponding permutation matrix are
\[A=\left(\begin{array}{ccc}
-3 & 0 & 1 
\\
 0 & -2 & 1 
\\
 1 & 1 & -1 
\end{array}\right),\quad A'=\left(\begin{array}{ccc}
-2 & 1 & 0 
\\
 1 & -1 & 1 
\\
 0 & 1 & -3 
\end{array}\right),\quad \Sigma=\left(\begin{array}{ccc}
0 & 0 & 1 
\\
 1 & 0 & 0 
\\
 0 & 1 & 0 
\end{array}\right).
\]
There is a unique balanced divisor 
 $\mc B=C=\{y^2+x^3=0\}$ and the vectors  $S_{\F}=S_C$ and $S_\F'=S_C'$ are given by
\[S_C=(0,0,1)^\mathsf{T},\quad S'_C=\Sigma^\mathsf{T}S_C=(0,1,0)^\mathsf{T}.\]
Finally, the proximity matrices are
\[F=\left(\begin{array}{ccc}
1 & 0 & 0 
\\
 -1 & 1 & 0 
\\
 -1 & -1 & 1 
\end{array}\right),\quad F'=\Sigma^\mathsf{T}F\Sigma=\left(\begin{array}{ccc}
1 & 0 & -1 
\\
 -1 & 1 & -1 
\\
 0 & 0 & 1 
\end{array}\right).
\]
Notice that $F'$ is no longer lower triangular.
\end{example}

\end{document}